\theoremstyle{plain}
\newtheorem{theorem}{Theorem}
\newtheorem{definition}[theorem]{Definition}
\newtheorem{lemma}[theorem]{Lemma}
\newtheorem{proposition}[theorem]{Proposition}
\newtheorem{corollary}[theorem]{Corollary}
\newtheorem{example}[theorem]{Example}
\newtheorem{assumption}[theorem]{Assumption}
\newtheorem{remark}[theorem]{Remark}
\numberwithin{equation}{section}
\numberwithin{theorem}{section}
\numberwithin{figure}{section}
\newcounter{mycount}
\newenvironment{romlist}{\begin{list}{\rm(\roman{mycount})}%
   {\usecounter{mycount}\labelwidth=1cm\itemsep 0pt}}{\end{list}}
\newenvironment{numlist}{\begin{list}{\arabic{mycount}.}%
   {\usecounter{mycount}\labelwidth=1cm\itemsep 0pt}}{\end{list}}
\newenvironment{letlist}{\begin{list}{\rm(\alph{mycount})}%
   {\usecounter{mycount}\labelwidth=1cm\itemsep 0pt}}{\end{list}}
\newcommand\s{\sigma}
\newcommand\ol{\overline}
\newcommand\oo{\infty}
\newcommand\De{\Delta}
\newcommand\NN{{\mathbb N}}
\newcommand\TT{{\mathbb T}}
\newcommand\sP{{\mathcal P}}
\newcommand\cA{{\mathcal A}}
\newcommand\sA{\Gamma}
\newcommand\sG{{\mathcal G}}
\newcommand\sO{{\mathcal O}}
\newcommand\sN{{\mathcal N}}
\newcommand\sB{{\mathcal B}}
\newcommand\ZZ{{\mathbb Z}}
\newcommand\la{\lambda}
\newcommand\wt{\widetilde}
\newcommand\om{\omega}
\newcommand\sm{\setminus}
\renewcommand\a{\alpha}
\newcommand\Ga{\Gr}
\newcommand\Gr{\sG}
\newcommand\Si{\Sigma}
\newcommand\si{\sigma}
\newcommand\eps{\epsilon}
\renewcommand\b{\beta}
\newcommand\g{\gamma}
\newcommand\resp{respectively}
\newcommand\de{\delta}
\newcommand\amb{\linebreak[1]}
\newcommand\lell{\ell}
\newcommand\Stab{\text{\rm Stab}}
\newcommand\Stabo{\Stab^0}
\newcommand\olmu{{\ol \mu}}
\newcommand\olG{{\ol G}}
\newcommand\olV{{\ol V}}
\newcommand\olE{{\ol E}}
\newcommand\olGa{{\sA}}
\newcommand\olF{F}
\newcommand\ole{{\ol e}}
\newcommand\vG{{\vec G}}
\newcommand\vL{\vec L}
\newcommand\vE{{\vec E}}
\newcommand\vell{\vec\ell}
\newcommand\opi{\ol\pi}
\newcommand\vpi{\vec\pi}
\newcommand\vnu{\vec\nu}
\newcommand\vmu{\vec\mu}
\newcommand\vSi{\vec\Sigma}
\newcommand\vsi{\vec\sigma}
\newcommand\q{\quad}
\newcommand\Aut{\text{\rm Aut}}
\newcommand\pd{\partial}
\newcommand\ro{{\text{\rm o}}}
\newcommand\id{\iota}
\newcommand\modl{\Lambda}
\newcommand\osA{\cA}
\newcommand\sAz{\cA}
\newcommand\fcp{finite coset property}
\newcommand\origin{\mathbf{0}}
\newcommand\pc{p_{\text{\rm c}}}
\begin{document}
\title[Strict inequalities for connective constants]
{Strict inequalities\\ for connective constants\\ of transitive graphs}
\author[Grimmett]{Geoffrey R.\ Grimmett}
\address{Statistical Laboratory, Centre for
Mathematical Sciences, Cambridge University, Wilberforce Road,
Cambridge CB3 0WB, UK} 
\email{\{g.r.grimmett, z.li\}@statslab.cam.ac.uk}
\urladdr{\url{http://www.statslab.cam.ac.uk/~grg/}}
\urladdr{\url{http://www.statslab.cam.ac.uk/~zl296/}}

\author[Li]{Zhongyang Li}

\begin{abstract}
The \emph{connective constant} of a graph is the exponential growth
rate of the number of self-avoiding walks starting at a  given vertex.
Strict inequalities are proved for connective constants of 
vertex-transitive graphs. Firstly, the 
connective constant \emph{decreases} strictly 
when the graph is replaced by a non-trivial quotient graph. 
Secondly, the connective constant \emph{increases} strictly when
a quasi-transitive family of new edges is added. These results
have the following implications for Cayley graphs.  The
connective constant of a Cayley graph decreases strictly when
a new relator is added to the group, and increases strictly when a 
non-trivial group element is declared to be a generator.

\end{abstract}

\date{14 January 2013, revised 1 January 2014 and 8 April 2014} 

\keywords{Self-avoiding walk, connective constant, regular graph, vertex-transitive graph, 
quasi-transitive graph, Cayley graph}
\subjclass[2010]{05C30, 82B20, 60K35}

\maketitle

\section{Introduction}\label{sec:intro}
A self-avoiding walk (abbreviated to SAW) is a path that revisits no vertex. Self-avoiding walks
were first introduced in the context of long-chain polymers in chemistry (see \cite{f}),
and they have been studied intensively
 since by mathematicians and physicists
(see \cite{ms}). 
If the underlying graph $G$ has some periodicity, the 
number of $n$-step SAWs
with a given starting point grows (asymptotically) exponentially as $n\to\oo$, 
with some growth rate $\mu(G)$ called the \emph{connective constant} 
of the graph. There are only few graphs $G$ for which $\mu(G)$ is known exactly,
and a substantial part of the associated literature is devoted to inequalities for such constants.
The purpose of the current work is to establish conditions under which a systematic change to $G$ results in
a \emph{strict} change to $\mu(G)$.

We have two main results for an infinite, vertex-transitive graph $G$, as follows. 
The automorphism group of $G$
is denoted by $\Aut(G)$. Precise conditions are given in the formal statements of the theorems.

\begin{numlist}
\item (Theorem \ref{si}) Let the subgroup $\sA\subseteq \Aut(G)$ act transitively on $G$, 
and let $\sAz\subseteq \sA$ be a non-trivial, normal subgroup of $\sA$
(satisfying a minor condition). 
The (directed) quotient graph $\vG=G/\sAz$
satisfies $\mu(\vG ) < \mu(G)$.

\item (Theorem \ref{thm:addedge}) Suppose new edges are added 
in such a way that the resulting graph is quasi-transitive
(subject to a certain algebraic condition). 
The connective constant of the new graph $\olG$ satisfies
$\mu(G) < \mu(\olG)$.
\end{numlist}

These inequalities have the following implications for Cayley graphs.
Let $G$ be the Cayley graph of an infinite, 
finitely generated group $\Ga$ with generator set $S$ and relator set $R$.

\begin{itemize}
\item[3.] (Corollary \ref{caleydecrease}) 
Let $G_\rho$ be the Cayley graph obtained by adding to $\Ga$ a further non-trivial relator $\rho$. 
Then $\mu(G_\rho) <\mu(G)$.

\item[4.] (Corollary \ref{thm:thm4}) Let $w$ be a non-trivial element of $\Ga$ that is not a generator,
and let $\olG_w$ be the Cayley graph obtained by declaring $w$
to be a further generator. Then $\mu(G) < \mu(\olG_w)$.
\end{itemize} 

The proofs follow partly the general approach of Kesten's proof of the
pattern theorem, see \cite{hkI} and \cite[Sect.\ 7.2]{ms}.
Any $n$-step
SAW $\pi$ in the smaller graph $G$  lifts to a SAW $\pi'$ in the larger
graph $G'$. The
idea is to show that `most' such $\pi$ contain at least $an$ sub-walks 
for which the corresponding sections of $\pi'$ may be replaced in
at least two different ways by SAWs on $G'$.
Different subsets of these sub-walks give rise to different SAWs 
on $G'$. The number of such subsets grows exponentially in $n$, and this
introduces an exponential `entropic' factor in the count of SAWs.

Whereas Kesten's proof and subsequent 
elaborations were directed at certain specific lattices, our results
apply in the general setting of vertex-transitive graphs, and they
require new algebraic and combinatorial techniques.   Indeed, the work
reported here may be the first systematic study of SAWs
on general vertex-transitive graphs.

Related questions have been considered in the contexts of percolation and
disordered systems. Consider a given model on a graph $G$, such as a percolation or an Ising/Potts model.
There is generally a singularity at some parameter-value called the `critical point'.
For percolation, the parameter in question is the \emph{density} of open sites or bonds, and for the
models of statistical physics it is the \emph{temperature}. 
It is important and useful to understand something of how the critical point  varies with the 
choice of graph. In particular, under what conditions does a systematic change in the graph
cause a \emph{strict} change in the value of  the critical point?
A general approach to this issue was presented in \cite{AG} and developed further in \cite{BGK,G94}
and \cite[Chap.\ 3]{G99}.

Turning back to SAWs, the SAW generating function has radius of convergence $1/\mu$, and is
believed (for lattice-graphs at least) to have power-law behaviour near its critical point, see \cite{ms}.
The above theorems amount to strict inequalities for the critical point as the
underlying graph $G$ varies. Despite a similarity of the problem with that of disordered systems,
the required techniques for SAWs are substantially different. 
We concentrate here on vertex-transitive graphs, and the required conditions are expressed in 
the language of algebra. 
There is another feasible approach to proving strict inequalities, namely the bridge-decomposition
method introduced by Hammersley and Welsh in \cite{HW62}, and used more
recently in various works including \cite{BGG,Li-loc} and \cite[Thm 8.2.1]{ms}.

Basic notation and  facts about
SAWs and connective constants are presented in Section \ref{sec:notation}. 
There is a large literature concerning SAWs, 
of which we mention \cite{a04,bdgs,j04,ms}.
For accounts of algebraic graph theory, see \cite{bab95,GR01,LyP,SoW}.

This paper has two companion papers, \cite{GrL1,GrL2}. In \cite{GrL1}, we prove bounds
on connective constants of vertex-transitive graphs, in particular
$\mu \ge \sqrt{\De-1}$ when $G$ is an infinite, connected, $\De$-regular,
vertex-transitive, simple graph. In \cite{GrL2}, 
we explore the effect on SAWs of the Fisher transformation, applied
to a cubic or partially cubic graph.  This work is reviewed in \cite{GrLrev}.

\section{Notation and definitions}\label{sec:notation}

All graphs considered here are connected and infinite. 
Subject to a minor exception in Section \ref{sec:cayley},
they may not
contain \emph{loops} (that is, edges both of whose endpoints are the same vertex) 
but, in certain circumstances, they are permitted to have
\emph{multiple edges} (that is, two or more edges with the same pair of endpoints). 
A graph $G=(V,E)$ is called \emph{simple} if it
has neither loops nor multiple edges. 
An edge $e$ with endpoints $u$, $v$ is written $e=\langle u,v \rangle$,
and two distinct edges with the same endpoints are called \emph{parallel}.
If $\langle u,v \rangle \in E$, we call $u$ and $v$ \emph{adjacent}
and write $u \sim v$. Let
$\pd v=\{u: u \sim v\}$ denote the set of neighbours of $v \in V$.

The \emph{degree} of vertex $v$ is the number of edges
incident to $v$. 
We assume that the vertex-degrees of a given graph $G$
are finite with supremum $\De < \oo$.  
The \emph{graph-distance} between two vertices $u$, $v$ is the number of edges
in the shortest path from $u$ to $v$, denoted $d_G(u,v)$.

The automorphism group of the graph $G=(V,E)$ is
denoted $\Aut(G)$. A subgroup $\sA \subseteq \Aut(G)$ is said to \emph{act
transitively} on $G$  (or on the vertex-set $V$)
if, for $v,w\in V$, there exists $\g \in \sA$ with $\g v=w$.
It is said to  \emph{act quasi-transitively} if there is a finite
set $W$ of vertices (called a \emph{fundamental domain})
such that, for $v \in V$, there exist
$w \in W$ and $\g \in \sA$ with $\g v =w$.
The graph is called \emph{vertex-transitive} 
(\resp, \emph{quasi-transitive}) if $\Aut(G)$ acts transitively
(\resp, quasi-transitively).  The identity element of any group is denoted by $\id$.

A \emph{walk} $w$ on $G$ is
an alternating sequence $w_0e_0w_1e_1\cdots e_{n-1} w_n$ of vertices $w_i$
and edges $e_i=\langle v_i, v_{i+1}\rangle$.
We write $|w|=n$ for the \emph{length} of $w$, that is, the number of edges in $w$.
The walk $w$ is called \emph{closed} if $w_0=w_n$. A \emph{cycle}
(or $n$-\emph{cycle}) is a closed walk $w$ with distinct edges
and $w_i\ne w_j$ for $1 \le i < j \le n$.
Thus, two parallel edges form a $2$-cycle.

Let $n \in \{1,2,\dots\}\cup\{\oo\}$. 
An \emph{$n$-step self-avoiding walk} (SAW) 
on $G$ is  a walk containing $n$ edges
no vertex of which appears more than once.
Let $\s_n(v)$ be the number of $n$-step SAWs 
 starting at $v\in V$. We are interested here in the exponential growth rate
of $\s_n(v)$. Note that, in the presence of parallel edges,
 two SAWs with identical vertex-sets but different edge-sets are considered
as distinct SAWs.

\begin{theorem}\cite{jmhII}\label{jmh}
Let $G=(V,E)$ be an infinite, connected, quasi-transitive graph with finite vertex-degrees. There exists
$\mu=\mu(G)\in[1,\oo)$, called the \emph{connective constant} of $G$,  such that
\begin{equation}\label{connconst}
\lim_{n \to \oo} \s_n(v)^{1/n} = \mu, \qquad v \in V.
\end{equation}
\end{theorem}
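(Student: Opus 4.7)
The plan is to follow Hammersley's classical sub-multiplicativity approach. Let $\sA\subseteq\Aut(G)$ act quasi-transitively with finite fundamental domain $W$. Automorphisms of $G$ preserve SAW counts, so $\s_n(v)$ depends only on the $\sA$-orbit of $v$, and it suffices to understand $\s_n(w)$ for $w\in W$. Set $\tau_n:=\max_{w\in W}\s_n(w)$. Any $(m+n)$-step SAW from $w$ splits at its $m$-th vertex into an $m$-SAW from $w$ ending at some $u$ and an $n$-SAW from $u$; dropping the requirement that the second piece avoid the first gives
\begin{equation*}
\s_{m+n}(w)\le\s_m(w)\cdot\max_{u\in V}\s_n(u)=\s_m(w)\cdot\tau_n,
\end{equation*}
so $\tau_{m+n}\le\tau_m\tau_n$ upon taking the maximum over $w\in W$. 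Fekete's lemma then delivers $\tau_n^{1/n}\to\mu:=\inf_n\tau_n^{1/n}$.

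The trivial bound $\tau_n\le\De^n$ gives $\mu\le\De<\oo$. For $\mu\ge 1$, since $G$ is infinite and connected, for each $n$ there exists a vertex at graph-distance exactly $n$ from $w$, and a shortest path to it is a SAW of length $n$ from $w$; hence $\tau_n\ge 1$ and $\mu\ge 1$.

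It remains to promote the convergence from $\tau_n$ to the individual counts $\s_n(v)$. The upper bound $\limsup\s_n(v)^{1/n}\le\mu$ is immediate since $\s_n(v)=\s_n(w)\le\tau_n$ for the orbit representative $w\in W$ of $v$. The matching lower bound, $\liminf\s_n(w)^{1/n}\ge\mu$ for every $w\in W$, is the main obstacle. I would handle it by a path-joining argument: the pigeon-hole principle applied to the finite set $W$ yields some $w^*\in W$ with $\s_{n_k}(w^*)=\tau_{n_k}$ along an infinite subsequence $\{n_k\}$. For arbitrary $w\in W$, fix a SAW $\alpha$ from $w$ to $w^*$ of length at most $d_0:=\max_{u,u'\in W}d_G(u,u')$. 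Then $\alpha\pi$ is a $(|\alpha|+n)$-SAW from $w$ whenever $\pi$ is an $n$-SAW from $w^*$ whose vertex set is disjoint from $\alpha\sm\{w^*\}$. A Kesten-style decomposition of `bad' SAWs at their first visit to this fixed finite set, combined with the sub-multiplicativity of $\tau$, yields a bound of the form $\s_{n+d_0}(w)\ge c\,\tau_n$ for a positive constant $c$ independent of $n$, which closes the argument. This combinatorial fraction estimate, controlling how often SAWs from $w^*$ revisit a given finite vertex set, is where the main effort lies.
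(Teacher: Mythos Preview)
The paper does not prove this theorem; it is quoted from Hammersley and accompanied only by the remark that subadditivity is the key ingredient, together with the consequence $\sup_{v}\sigma_n(v)\ge\mu^n$. Your argument for $\tau_n:=\max_{w\in W}\sigma_n(w)$ via sub-multiplicativity and Fekete's lemma is exactly that standard route, and the bounds $1\le\mu\le\Delta$ and the upper estimate $\limsup_n\sigma_n(v)^{1/n}\le\mu$ are handled correctly.

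The lower-bound step, however, has two problems. First, the pigeonhole selection of $w^*$ yields $\sigma_{n_k}(w^*)=\tau_{n_k}$ only along a subsequence $(n_k)$, and the gaps $n_{k+1}-n_k$ need not be bounded; so even if the concatenation succeeded, you would obtain only $\limsup_n\sigma_n(w)^{1/n}\ge\mu$, not the required $\liminf$. Second, and more seriously, the asserted inequality $\sigma_{n+d_0}(w)\ge c\,\tau_n$ does \emph{not} follow from a first-visit decomposition combined with sub-multiplicativity of $\tau$. Decomposing a bad SAW at its first entry into $A:=\alpha\setminus\{w^*\}$ bounds the number of bad walks by a sum of the shape $\sum_{a\in A}\sum_{j\ge 1} g_j(a)\,\sigma_{n-j}(a)$; but sub-multiplicativity gives $\tau_{n-j}\ge\tau_n/\tau_j$, an inequality in the wrong direction to control this sum from above. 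Indeed, the statement that a fixed finite set is avoided by a uniformly positive fraction of $n$-step SAWs is itself a pattern-theorem-type assertion of exactly the kind the present paper works hard to establish (cf.\ Lemmas~\ref{zl}--\ref{zll}); it is not an input one can invoke for free. Note also that one cannot simply assume the component of $w^*$ in $G\setminus A$ carries the full growth rate $\mu$.

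If you want to repair the argument, the cleanest routes avoid the avoidance issue altogether: either argue with endpoints and reversal (counting SAWs by the orbit of their terminal vertex, so that concatenation happens at the \emph{end} of the walk, where the obstructing set lies ahead rather than behind), or set up the $|W|\times|W|$ matrix of type-to-type SAW counts and appeal to an irreducibility/Perron--Frobenius argument to show all entries share the same exponential growth rate.
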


Subadditivity plays a key part in the proof of this theorem. It yields the inequality
\begin{equation}\label{1215}
\sup_{v \in V} \si_n(v) \ge \mu^n, \qquad n \ge 0,
\end{equation} 
which will be useful later in this paper. A further proof of Theorem \ref{jmh}
may be found  in \cite{GHP}.

See \cite[Thm 3.1]{GrL2} (and also \cite[Prop.\ 1.1]{Lac}) for an elaboration of 
Theorem \ref{jmh} in the absence of quasi-transitivity.
We note for use in Section \ref{sec:cayley} that the above notation may be extended in a natural way to
\emph{directed} graphs, and we omit the details here.
In particular, one may define the connective constant $\vmu
= \mu(\vG)$ of a \emph{directed}, quasi-transitive graph $\vG$
by \eqref{connconst} with $\s_n(v)$ replaced by the number of 
\emph{directed} $n$-step SAWs (whenever the relevant limits exist).

\section{Strict inequalities for vertex-transitive graphs}\label{sec:cayley}

\subsection{Introduction}

In this section, we present and discuss the two strict 
inequalities for connective constants of vertex-transitive graphs.
The first (Theorem \ref{thm:addedge} in Section \ref{sec:addedge}) deals with 
the effect of adding a quasi-transitive 
family of new edges, and the second (Theorem \ref{si} in Section \ref{sec:quot}) deals with quotient graphs.
Proofs
of these two principal theorems are found in Sections \ref{sec:si-proof}--\ref{sec:cayley-proof}
(readers of these proofs will be aided by familiarity with the proof
of the pattern theom, see \cite[Sect.\ 7.2]{ms}).
Two further results are presented in Section \ref{sec:two} concerning, \resp, 
the question of whether the proof of Theorem \ref{si} may be transformed into an algorithm,
and the possible removal of the assumption of normality in Theorem \ref{si}.  
Interspersed with the principal Sections \ref{sec:addedge} and \ref{sec:quot} are 
the discussions of Sections \ref{sec:dis1} and  \ref{sec:unim} which may be omitted at first reading.

The implications of the above inequalities for Cayley
graphs are presented in Section \ref{sec:cayley2}, see Corollaries \ref{caleydecrease} 
and \ref{thm:thm4}. 

Let $G=(V,E)$ be an infinite, connected, vertex-transitive, simple graph.
We assume throughout that the vertex-degree $\De$ of $G$ satisfies $\De<\oo$. 

\subsection{Quasi-transitive augmentation}\label{sec:addedge}

Let $G=(V,E)$ be as above, and let $\olG=(V, \olE)$ 
be obtained from $G$ by adding further edges, possibly in parallel
to existing edges. We shall assume that $E$ is a \emph{proper} subset of $\olE$, and that the additional
edge-set  $\olE\setminus E$ has a property of quasi-transitivity 
with respect to some automorphism-subgroup
$\osA$.  We shall require some properties of this subgroup $\osA$, and to this end
we introduce next a certain definition, followed by the main result of this subsection.

\begin{definition}\label{def:fcp}
Let $\sA \subseteq \Aut(G)$ act transitively on $G$.
A subgroup $\osA \subseteq \sA$ is said to 
\emph{have the \fcp} with \emph{root} $\rho \in V$ (in $\sA$) if there exist
$\nu_0,\nu_1,\dots,\nu_s \in \sA$, with $\nu_0=\id$ and $s<\oo$, such
that $V$ is partitioned as $\bigcup_{i=0}^s \nu_i \osA \rho$. 
It is said simply to have the \fcp\ if it has this property with some root.
\end{definition}

\begin{theorem}\label{thm:addedge}
Let $\sA$  act transitively on $G$, 
and let $\osA$ be a subgroup of $\sA$
with the \fcp. If $\osA \subseteq \Aut(\olG)$, then
$\mu(G) < \mu(\olG)$. 
\end{theorem}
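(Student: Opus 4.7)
The plan is the Kesten-style construction sketched in the introduction, quantified so as to produce a strict entropic gap $\ol\mu := \mu(\olG) > \mu$. Every SAW on $G$ is also a SAW on $\olG$, so $\mu \le \ol\mu$ is automatic. Fix a new edge $e^* = \langle a, b\rangle \in \olE \sm E$ and a $G$-geodesic $P_0$ from $a$ to $b$ of length $d = d_G(a,b) \ge 1$; since $\osA \subseteq \Aut(\olG)$, each translate $\g e^*$ ($\g \in \osA$) is also a new edge and $\g P_0$ is a corresponding $G$-SAW. For any $G$-SAW $\pi$ that contains some $\g P_0$ as a consecutive subwalk, swapping $\g P_0$ for the single edge $\g e^*$ yields an $\olG$-SAW of length $|\pi| - (d-1)$; given $k$ pairwise disjoint such subwalks of $\pi$ and any subset $S \subseteq \{1,\dots,k\}$, performing the swap independently at each position $i \in S$ produces a distinct $\olG$-SAW $\pi_S$ of length $|\pi| - |S|(d-1)$. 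The pair $(\pi, S)$ can be recovered from $\pi_S$ by restoring each new edge appearing in $\pi_S$ to the corresponding translate of $P_0$ (after fixing, for each $\pi$, a canonical choice of disjoint occurrences), so the map $(\pi, S) \mapsto \pi_S$ is injective across the relevant family of pairs.

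The crux is a pattern-density estimate: there exists $\a > 0$, depending only on $e^*$ and $\osA$, such that for all large $n$, at least half of all $n$-step $G$-SAWs from any given vertex contain $\ge \a n$ pairwise disjoint consecutive subwalks each of the form $\g P_0$ for some $\g \in \osA$. I would establish this by adapting Kesten's pattern-theorem template (cf.\ \cite[Sect.\ 7.2]{ms}): if too many $G$-SAWs had fewer such occurrences, then \emph{inserting} extra translates of $P_0$ at a positive density of admissible insertion sites in such SAWs would manufacture strictly more than $\mu^n$ SAWs of length somewhat greater than $n$, contradicting the definition of $\mu$. The \fcp\ enters essentially here: the finite partition $V = \bigcup_{i=0}^s \nu_i \osA \rho$ ensures that along any long SAW one can locate $\Theta(n)$ positions suitable for inserting an $\osA$-translate of $P_0$, and supplies the uniform structural control needed to carry out those insertions locally. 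Porting the $\ZZ^d$-combinatorial insertion argument of Kesten's original proof to this purely vertex-transitive algebraic setting is the step I expect to constitute the main technical obstacle.

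Granting the pattern-density estimate, the conclusion is rapid. Writing $\ol\s_m(v)$ for the count of $m$-step SAWs on $\olG$ from $v$, pick $v$ with $\s_n(v) \ge \mu^n$ (possible by \eqref{1215} together with vertex-transitivity of $G$), and for each $\pi$ in the bulk family fix its canonical set of $\lceil \a n\rceil$ disjoint occurrences. Letting $S$ range over all $2^{\lceil \a n \rceil}$ subsets and using injectivity together with $|\pi_S| \in [n - \lceil \a n\rceil(d-1),\, n]$ gives
\[
    \sum_{m\, =\, n - \lceil \a n\rceil(d-1)}^{n} \ol\s_m(v) \ \ge\ \tfrac12 \cdot 2^{\lceil \a n\rceil} \mu^n.
\]
Pigeonholing over the $O(n)$ summands yields $\ol\s_{m_n}(v) \ge c\, 2^{\a n} \mu^n / n$ for some $m_n \le n$. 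Provided $\a$ is chosen small enough that $m_n \ge c' n$ for a constant $c' > 0$, taking $m_n$-th roots as $n \to \oo$ gives $\ol\mu \ge 2^\a \mu > \mu$, completing the proof.
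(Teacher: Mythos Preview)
Your proposal identifies the right shape of argument but leaves the essential step unproved. The pattern-density estimate you call ``the crux'' is indeed the entire substance of the theorem, and your sketch of how to obtain it does not work in this generality. Kesten's insertion technique on $\ZZ^d$ relies on lattice-specific surgery (locking a pattern into a box and splicing it into a SAW); on a general vertex-transitive graph there is no analogous local modification that forces a prescribed subwalk $\g P_0$ to appear as a \emph{consecutive} segment of a SAW. Your claim that the \fcp\ ``ensures that along any long SAW one can locate $\Theta(n)$ positions suitable for inserting an $\osA$-translate of $P_0$'' is not justified: the partition $V=\bigcup_i \nu_i\osA\rho$ only tells you that each vertex lies in one of finitely many $\osA$-coset orbits, not that any of those positions admits a local insertion producing a longer SAW.

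The paper's route is genuinely different and bypasses insertion altogether. First, it proves (Lemma~\ref{l2}) that $\liminf_n \s_n(0,E_2^1)^{1/n}<\mu$, where $E_2^1$ records that a SAW visits at least two vertices of some $\sA$-translate of the cycle $C_1$ through $v_0$; note the use of the full transitive group $\sA$, not $\osA$. This is established by an iterative edge-class deletion argument (Lemma~\ref{lem:Ch}): assuming the contrary, one successively removes $\sA$-equivalence classes of edges without changing the connective constant, and a concatenation count (inequality~\eqref{rcn}) eventually forces a contradiction. Only \emph{after} this density statement for $\sA$-translates is in hand does the \fcp\ enter, and its role is different from what you describe: it is used to pigeonhole the $\sA$-orbit of $C_1$ into finitely many classes $\sO_i=\nu_i\osA C_1$, so that one class $\sO_i$ already accounts for positive density. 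The swap is then performed inside the graph $\nu_i\olG$ (isomorphic to $\olG$ by Lemma~\ref{lem:abar}), which is what guarantees that the replacement edge $\g_t f$ is genuinely present. Your proposal conflates these two distinct uses of the group structure and, in effect, assumes the hard part.
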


The \fcp\ is somewhat technical, but it is satisfied in two cases of practical interest.
The following proposition is proved at the end of Section \ref{sec:dis1}.

\begin{proposition}\label{thm:pi}
Let $\sA$ act transitively on $G$, and $\rho \in V$. The subgroup $\osA$ of $\sA$ 
has the \fcp\ with root $\rho$ if either of the following holds.
\begin{letlist}
\item $\osA$ is a normal subgroup of $\sA$ which acts quasi-transitively on $G$.
\item The index $[\sA: \osA]$ is finite.
\end{letlist}
\end{proposition}

We ask whether the condition of Theorem \ref{thm:addedge} may be relaxed. 
More generally,
is it the case that $\mu(G) < \mu(\olG)$ whenever $G=(V,E)$ is transitive
and $\olG=(V,\olE)$ is quasi-transitive, with $E$ a proper subset of $\olE$.

\begin{example} \label{ex:sqt}

The square/triangular lattices present an elementary example of Theorem \ref{thm:addedge}
in action. Let $\ZZ^2$ denote the square lattice, and let $\sA$ be the group
of its translations.  The triangular lattice $\TT$ is obtained by adding the edge 
$e=\langle\origin,(1,1)\rangle$
and its images under $\sA$, where $\origin$ is the origin $(0,0)$. 
Since $\sA$ is a normal subgroup
of itself with the \fcp, we deduce that $\mu(\ZZ^2) < \mu(\TT)$.
This inequality is in fact an elementary consequence of known numerical bounds, 
see for example 
\cite{a04}. The current conclusion may however be extended beyond such known bounds
by use of Theorem \ref{thm:addedge} as follows.

Let $u,v\in\ZZ^2$ be linearly independent (non-zero) vectors, and let $\sAz_{u,v}$ be 
the subgroup of $\sA$ containing all translations of the form $mu+nv$
as $m$ and $n$ range over $\ZZ$.
It is easily seen that $\sAz_{u,v}$ is a normal subgroup of $\sA$ which
acts quasi-transitively, and hence $\sAz_{u,v}$ has the \fcp.

Let $w\in \ZZ^2$ satisfy $w \ne \origin$, and let $e$
be a new edge $\langle \origin ,w\rangle$. Let $\TT_{u,v,w}$
be the graph obtained from $\ZZ^2$ by adding
the set $\sAz_{u,v} e$ of all images of  $e$ under $\sAz_{u,v}$. 
The regular triangular lattice $\TT$ is retrieved by appropriate choice of $u$, $v$, $w$.

By Theorem \ref{thm:addedge}, $\mu(\ZZ^2) < \mu(\TT_{u,v,w})$. 
The corresponding percolation theorem states that the critical probabilities
satisfy $\pc(\ZZ^2) < \pc(\TT')$ for certain $\TT'$ obtained
by enhancing $\ZZ^2$; see \cite{AG} and \cite[Chap.\ 3]{G99}. 
This requires less symmetry on the distribution of new edges, and
it suffices that there exists $M<\oo$ such that every vertex is within distance 
$M$ of a new edge. 
\end{example}

The conclusion of Theorem \ref{thm:addedge} is generally invalid if $G$ is assumed only quasi-transitive.
Consider, for example, the pair $G$, $\olG$ of Figure \ref{fig:qusi},
each of which has connective constant $1$.

\begin{figure}[htbp]
\centerline{\includegraphics[width=0.7\textwidth]{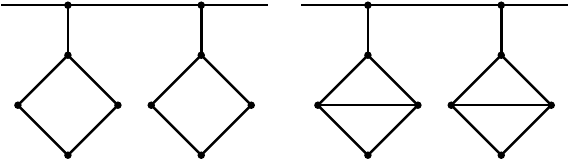}}
   \caption{The pair $G$, $\olG$. The patterns are extended infinitely in both directions. 
Each graph is quasi-transitive with connective constant $1$, 
and the second is obtained
from the first by the systematic addition of edges.}
\label{fig:qusi}
\end{figure}

\subsection{Algebraic discussion}\label{sec:dis1}

This section contains  discussion of certain algebraic facts
related to the \fcp, and it may be omitted at first reading.

The assumption of normality will recur in this paper, and we recall
the following \lq\lq standard facts".

\begin{remark}\label{rem3}
Let $\sA$ act transitively on the infinite graph $G=(V,E)$.
A partition $\sP$ of $V$is called $\sA$-\emph{invariant} if,
for $u, v\in V$ belonging to the same set of the partition,
and for $\g \in \sA$, $\g u$ and $\g v$ belong to the same set of the partition.

Let $\osA$ be a subgroup of $\sA$. The orbits
of $\osA$ form a  partition $\sP(\osA)$ of $V$. If $\osA$
is a normal subgroup, the partition $\sP(\osA)$ is $\sA$-invariant. 
If $\sP(\osA)$ is 
$\sA$-invariant, there exists a normal subgroup $\sN$ of $\sA$
such that  $\sP(\sN) = \sP(\osA)$.
In the latter case, $\sN$ may be taken as the normal closure
of $\osA$, that is, the intersection of
all normal subgroups of $\sA$ containing $\osA$.
\end{remark}

The proof of Proposition \ref{thm:pi}(b) makes use
of the relationship between the index of  $\osA$ and the number of its
orbits when acting on $V$. 
Consider $\sA$ as a group acting on $V$ with orbit-set denoted by $V/\sA$. It is said to
act \emph{freely} if every stabilizer is trivial, that is, if 
\begin{equation}\label{stab0}
\Stab_v:=\{\g\in \sA:\g v=v\}
\end{equation}
satisfies
\begin{equation}\label{stab1}
\Stab_v=\{\id\},\qquad v \in V.
\end{equation}
We abuse notation by saying that $\sA\sm \osA$ acts freely on $V$ if 
\begin{equation}\label{stab2}
\Stab_v \subseteq \osA, \qquad v \in V.
\end{equation}
The proof of the next proposition is at the end of this subsection.

\begin{proposition}\label{prop:qt}
Let $\sA$ act transitively on the countable set $V$, and let $\osA$ be a subgroup of $\sA$.
Then 
\begin{equation}\label{index}
|V/\osA| \le [\sA : \osA].
\end{equation}
If $|V/\osA| < \oo$, equality holds in \eqref{index} if and only if $\sA\sm\osA$ acts
freely on the set $V$.
\end{proposition}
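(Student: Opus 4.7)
The plan is to exploit the orbit–stabilizer correspondence between $V$ and the coset space $\sA/\Stab_\rho$ for a fixed root $\rho\in V$, and to identify the orbit space $V/\osA$ with a double-coset space.

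Concretely, I will fix $\rho\in V$ and let $H=\Stab_\rho$. Since $\sA$ acts transitively, the map $\g H\mapsto\g\rho$ is a bijection $\sA/H\to V$. A direct check shows that two vertices $\g\rho$ and $\g'\rho$ lie in the same $\osA$-orbit if and only if $\g'\in \osA\g H$, i.e., iff the corresponding left $H$-cosets lie in a common double coset of $(\osA,H)$ in $\sA$. Hence
\begin{equation*}
|V/\osA| \;=\; |\osA\backslash \sA/H|.
\end{equation*}
Each double coset $\osA\g H$ is a union of left $\osA$-cosets (explicitly $\osA\g H=\bigcup_{s\in H}\osA\g s$), so the number of double cosets is at most the number of left cosets of $\osA$ in $\sA$. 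This yields \eqref{index}.

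For the equality statement, under the assumption $|V/\osA|<\oo$, equality $|\osA\backslash\sA/H|=[\sA:\osA]$ forces each double coset $\osA\g H$ to consist of a single left coset $\osA\g$. The condition $\osA\g H=\osA\g$ is equivalent to $\g H\g^{-1}\subseteq\osA$. Since $\g H\g^{-1}=\Stab_{\g\rho}$ and, by transitivity, $\g\rho$ ranges over the whole of $V$ as $\g$ ranges over $\sA$, this condition holds for all $\g$ if and only if $\Stab_v\subseteq\osA$ for every $v\in V$, which is \eqref{stab2}. Conversely, if \eqref{stab2} holds then each double coset collapses to a left coset, so the number of double cosets equals $[\sA:\osA]$, and combined with $|V/\osA|<\oo$ this gives equality in \eqref{index}.

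I do not anticipate a real obstacle: the argument is standard coset bookkeeping once the orbit-to-double-coset dictionary is set up. The only point to be careful about is keeping the left/right conventions consistent, and handling the possibly infinite cardinal $[\sA:\osA]$ in the first inequality (for which the union-of-cosets observation suffices without any finiteness hypothesis).
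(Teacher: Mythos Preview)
Your proof is correct. The paper's argument is in essence the same but does not invoke double cosets explicitly: it fixes $v_0\in V$, chooses orbit representatives $v_i$ and elements $\gamma_i\in\sA$ with $\gamma_i v_0=v_i$, and checks directly that $\phi(\osA v_i):=\osA\gamma_i$ defines an injection $V/\osA\to\sA/\osA$, then analyses when $\phi$ is surjective by producing (or ruling out) an element of $\Stab_{v_j}\setminus\osA$. Your identification $V/\osA\cong\osA\backslash\sA/\Stab_\rho$ is the structural version of the same idea---the paper's $\phi$ picks out one $\osA$-coset from each double coset---and your criterion $\gamma H\gamma^{-1}\subseteq\osA$ is exactly the condition $\Stab_v\subseteq\osA$ rewritten via $\Stab_{\gamma\rho}=\gamma\Stab_\rho\gamma^{-1}$. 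The double-coset packaging is a little cleaner in that it avoids choosing representatives and makes the equality case transparent (a finite partition and a refinement with the same number of blocks must coincide); the paper's explicit injection is marginally more elementary. Substantively the two arguments coincide.
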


We make three comments concerning Theorem \ref{thm:pi}.
\begin{numlist}

\item Let $\osA \subseteq \Aut(\olG)$ act quasi-transitively
on $G$, and suppose there exists
$\sA\subseteq \Aut(G)$
such that $\sAz \subseteq \sA$, $\sA$ acts transitively on $G$, and
$\sA \setminus \osA$ acts freely on $V$.
We have by Proposition \ref{prop:qt} that $[\sA: \osA] = |V/\osA| <\oo$, whence $\osA$
has the \fcp\  (with arbitrary root) by Theorem \ref{thm:pi}(b).

\item By Remark \ref{rem3}, 
condition (a) of Theorem \ref{thm:pi} may be replaced by the 
apparently weaker assumption that
$\osA$ is a subgroup of $\sA$ acting quasi-transitively on $G$,
whose orbits form a $\sA$-invariant partition of $V$.

\item The \emph{normal core} of $\osA$ is the intersection of the conjugate
subgroups of $\osA$. If $[\sA:\osA] < \oo$, the normal core $\sN$ of $\osA$
satisfies $[\sA:\sN]<\oo$ (see \cite[1.6.9]{Rob}).
By Proposition \ref{prop:qt},  $\sN$ acts quasi-transitively on $V$.

\end{numlist}

This section closes with the proofs of Propositions \ref{thm:pi} and \ref{prop:qt}.

\begin{proof}[Proof of Proposition \ref{thm:pi}]
(a) Suppose $\osA$ is a normal subgroup of $\sA$ acting quasi-transitively on $G$.
With $\rho \in V$, the orbits of $\osA$ may be written as 
$\osA \nu_i \rho$ for suitable $\nu_i \in \sA$
and $0 \le i \le s < \oo$. 
Since $\osA$ is normal, $\osA \nu_i = \nu_i \osA$, and the claim follows.

\smallskip
\noindent
(b) Suppose $[\sA:\osA]<\oo$. The \fcp\ holds with the $\nu_i$
chosen so that the  $\nu_i \osA$ are the left cosets of $\osA$ in $\sA$.
\end{proof}

\begin{proof}[Proof of Proposition \ref{prop:qt}]
Fix  $v_0 \in V$, and choose $v_1,v_2\dots$ such
that $\osA v_0, \osA v_1, \dots$ 
are the (distinct) orbits of $\osA$. Let $\g_0 = \id$ and, for $i \ge 1$, find
$\g_i \in \sA$ such that $\g_i v_0 = v_i$.
Let $\sA/\osA$ denote the
set of right cosets of $\osA$, and let $\phi: V/\osA \to \sA/\osA$
be given by $\phi(\osA v_i) = \osA  \g_i$. We check next
that $\phi$ is an injection.

It suffices to show that $i=j$ whenever $\osA  \g_i = \osA  \g_j$.
Suppose $\osA  \g_i = \osA  \g_j$. There exists $\a\in\osA $ such
that $\g_i = \a \g_j$. Therefore,
$$
v_i = \g_i v_0 = \a \g_j v_0 = \a v_j,
$$
whence $\osA  v_i = \osA  v_j$ and $i=j$.
Thus, $\phi$ is an injection, and \eqref{index} follows.

Suppose $|V/\osA | < \oo$, and write $U:=\bigcup_{i=0}^\oo \osA  \g_i$. 
Equality holds in \eqref{index}
if and only if $\phi$ is a surjection, which is to say that $\sA = U$.
Assume there exists $\rho \in \sA\sm U$.
Find $j$ such that $\rho v_0 \in \osA  v_j$, say $\rho v_0 = \a v_j$ with $\a \in \osA $.
Then $\a^{-1}\rho \g_j^{-1} \in \Stab_{v_j}$. If $\a^{-1}\rho \g_j^{-1} \in \osA $,
then $\rho \in \osA  \g_j$ in contradiction of the assumption $\rho \notin U$.
Therefore, $\a^{-1}\rho \g_j^{-1} \in \sA\sm\osA $, and $\sA\sm\osA $ does not
act freely.

Suppose conversely that there exist $\rho \in \sA\sm \osA $ and $v \in V$ with
$\rho v = v$. Set $w_0=v$ and find $w_i \in V$ such that the (distinct) orbits of
$\osA $ are $\osA w_0, \osA  w_1,\dots$. Let $\g_0=\id$ and,
for $i \ge 1$, find $\g_i\in \sA$ such that
$\g_i w_0 = w_i$.  If $\rho \in \osA  \g_i$, then $\rho=\a \g_i$ for some $\a\in\osA $,
so that $w_0 = \rho w_0 = \a\g_i w_0 = \a w_i$. This implies that $i=0$, and hence $\rho\in \osA $,
a contradiction. Therefore, $\rho \in \sA \sm U$ where $U = \bigcup_i \osA  \g_i$.
It follows as above that $|V/\osA | < [\sA : \osA ]$.
\end{proof}

\subsection{Quotient graphs}\label{sec:quot}

Let $\sA$ be a subgroup of the automorphism group $\Aut(G)$ that
acts transitively, and let $\sAz$ be a subgroup of $\sA$.
We denote by $\vG =(\olV,\vE)$ the (directed) quotient graph 
$G/\sAz$ constructed as follows. Let $\approx$ be the equivalence relation on $V$ given by
$v_1 \approx v_2$ if and only if there exists 
$\a\in\sAz$ with $\a v_1=v_2$. The vertex-set $\olV$ 
comprises the equivalence classes of $(V,\approx)$, that is, the orbits $\ol v := \sAz v$ 
as $v$ ranges over $V$. For $v,w \in V$, 
we place $|\pd v \cap \ol w|$ directed edges
from $\ol v$ to $\ol w$ (if $\ol v = \ol w$,
these edges are directed loops), and we write $\ol v \sim \ol w$ if $|\pd v \cap \ol w| \ge 1$
and $\ol v\ne \ol w$ (thus $\ol v \sim \ol w$ if and only if $\ol w \sim \ol v$).  
By the next lemma, the number $|\pd v \cap \ol w|$
is independent of the choice
of $v \in\ol v$.

\begin{lemma}\label{lem:welld}
Let $\sAz$ be a subgroup of $\sA$, and let $\ol v,\ol w \in \olV$.
The number $|\pd v\cap\ol w|$ is independent of the representative $v \in  \ol{v}$.
\end{lemma}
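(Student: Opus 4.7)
The plan is to produce an explicit bijection between $\pd v_1 \cap \ol w$ and $\pd v_2 \cap \ol w$ whenever $v_1,v_2 \in \ol v$. By definition of the equivalence relation $\approx$ used to construct $\olV$, there exists $\a \in \sAz$ with $\a v_1 = v_2$, and my claim is that the restriction of $\a$ itself supplies the desired bijection.

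Two properties of $\a$ need to be verified, and both are immediate. First, since $\a \in \sAz \subseteq \sA \subseteq \Aut(G)$, $\a$ is a graph automorphism, so it maps $\pd v_1$ bijectively onto $\pd(\a v_1) = \pd v_2$. Second, $\a$ preserves the orbit $\ol w = \sAz w$: for any $u = \b w$ with $\b \in \sAz$, we have $\a u = (\a\b)w \in \sAz w = \ol w$, using only that $\sAz$ is closed under composition. Combining the two properties, $\a$ restricts to an injection $\pd v_1 \cap \ol w \to \pd v_2 \cap \ol w$. Applying the same reasoning to $\a^{-1} \in \sAz$ yields the reverse injection, so the restriction of $\a$ is a bijection between the two sets, and in particular $|\pd v_1 \cap \ol w| = |\pd v_2 \cap \ol w|$.

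There is no genuine obstacle here: the statement is essentially a well-definedness check ensuring that the edge multiplicities used to define $\vG$ do not depend on the chosen representatives, and it relies only on the group structure of $\sAz$ together with the fact that its elements act as automorphisms of $G$.
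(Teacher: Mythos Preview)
Your proof is correct and essentially identical to the paper's own argument: both pick $\a \in \sAz$ carrying one representative to the other, observe that $\a$ maps $\pd v_1 \cap \ol w$ injectively into $\pd v_2 \cap \ol w$ because $\a$ is a graph automorphism preserving the orbit $\ol w$, and then invoke the reverse direction (the paper says ``by symmetry'' where you explicitly apply $\a^{-1}$).
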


\begin{proof}
Let $v,v'\in\ol{v}$ and $v'\neq v$. 
Choose $\a\in\sAz$ such that $\a v=v'$. 
Then $x\in\pd v\cap\ol w$
is mapped to $\a x\in\pd v'\cap\ol w$, whence $\a$ acts
as an injection from $\pd v \cap \ol w$ to $\pd v' \cap \ol w$.
Therefore, $|\pd v \cap \ol w| \le |\pd v' \cap \ol w|$, and the claim follows
by symmetry.
\end{proof}

 We call $\sAz$ \emph{symmetric} if
\begin{equation}\label{sym}
|\pd v \cap \ol w| = |\pd w\cap \ol v|, \qquad v,w \in V.
\end{equation}
Two sufficient conditions for symmetry are presented in the forthcoming Lemma \ref{prop:welld}.

We recall that the orbits of $\sAz$ are
invariant under $\sA$ if and only if they are the orbits
of some normal subgroup of $\sA$ (see Remark \ref{rem3}).
\emph{Assume henceforth that $\sAz$ is a normal subgroup of $\sA$.}
It is standard that $\a\in\sA$ acts on $\vG$ 
by $\a(\sAz v) = \sAz (\a v)$, and 
that $\sA$ acts transitively on $\vG$.
Furthermore, for $v,w\in V$,
\begin{equation}\label{g703}
\ol v = \ol w\q \Leftrightarrow \q \forall \g\in\sA, \ \ol{\g v} = \ol{\g w}.
\end{equation}

Any walk $\pi$ on $G$ induces a (directed) walk $\vpi$ on $\vG$, and we say that
$\pi$ \emph{projects} onto $\vpi$.
For a walk $\vpi$ on $\vG$, there exists a walk $\pi$ on $G$ that
projects onto $\vpi$, and we  say that
$\vpi$ \emph{lifts} to $\pi$. There are generally
many choices for such $\pi$, and we fix such a choice as follows.
For  $\ol v_1, \ol v_2 \in \olV$,
we label the $N(v_1,v_2) :=|\pd v_1 \cap \ol v_2|$ directed edges 
from $\ol v_1$ to $\ol v_2$ in
a fixed but arbitrary way with the integers $1,2,\dots, N(v_1,v_2)$.
For $v \in \ol v_1$, we label similarly the edges from $v$ to
vertices in the set $\ol v_2$. For $v \in \ol v$, any $\vpi$ from $\ol v$
lifts to a unique
$\pi$ from $v$ that conserves edge-labellings, and thus walks from a given $v$ on $G$ are in
one--one correspondence with walks from $\ol v$ on $\vG$.
Since a SAW on $\vG$ lifts to a SAW on $G$, $\vmu:=\mu(\vG)$
satisfies $\vmu \le \mu(G)$.
Our task in this subsection is to identify sufficient conditions
for the strict inequality $\vmu < \mu(G)$.

We introduce next the so-called \emph{type} of the subgroup $\sAz$,
in terms of the length of
the shortest SAW of $G$ with endpoints in the same orbit.
Let $v\in V$, and let $w\ne v$ satisfy: 
$\ol w = \ol v$ and $d_G(v,w)$ is minimal
with this property. 
By the transitive action of $\sA$,
\begin{equation}\label{g708}
d_G(x,y) \ge d_G(v,w), \qquad x \ne y,\ \ol x = \ol y.
\end{equation}
We say that $v$ is of:
\begin{align*}
&\text{type 1\q if $d_G(v,w)=1$,}\\
&\text{type 2\q if $d_G(v,w)=2$,}\\
&\text{type 3\q if $d_G(v,w)\ge 3$.}
\end{align*}
By \eqref{g703},  every vertex has the same type, 
and thus we shall speak of the \emph{type} of  $\sAz$.

There follows the main theorem of this section.  A group is called \emph{trivial}
if it comprises the identity  $\id$ only. An automorphism $\b$ is said to \emph{fix}
a vertex $w$ if $\b w=w$.

\begin{theorem}\label{si}
Let $\sAz$ be a non-trivial, normal subgroup of $\sA$. 
The connective constant $\vmu=\mu(\vG)$ satisfies $\vmu < \mu(G)$ if: either
\begin{letlist}
\item the type of $\sAz$ is  $1$ or $3$, or
\item $\sAz$ has type $2$ and either of the following holds.
\begin{romlist}
\item $G$ contains  a SAW $v_0,w,v'$ satisfying $\ol v_0 = \ol v'$ and $|\pd v_0 \cap \ol w| \ge 2$, 
\item $G$ contains a SAW $v_0\,(=w_0),w_1,w_2,\dots,w_l\,(=v')$ satisfying 
$\ol v_0 = \ol v'$, $\ol w_i \ne \ol w_j$ for $0 \le i < j < l$, and furthermore $v' = \b v_0$ for
some $\b \in \sAz$ which fixes no $w_i$.
\end{romlist}
\end{letlist}
\end{theorem}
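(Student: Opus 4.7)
The plan is to follow the Kesten-style entropic approach sketched in the introduction, adapted to the quotient setting. By the labelling convention fixed before Lemma~\ref{lem:welld}, lifting identifies $n$-step SAWs on $\vG$ starting at $\ol v_0$ bijectively with those $n$-step SAWs $\pi=v_0,\dots,v_n$ on $G$ starting at $v_0$ whose vertices lie in pairwise distinct $\sAz$-orbits; call these \emph{o-distinct} SAWs. Thus $\sigma_n^{\vG}(\ol v_0)$ equals the number of o-distinct $n$-step SAWs from $v_0$, and the goal is to find $c>1$ such that at least $c^n$ SAWs on $G$ of length comparable to $n$ can be produced per o-distinct $n$-SAW, with bounded overcounting; together with \eqref{1215} this will yield $\mu(G)\ge c\,\vmu$.

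The surplus will be produced by a local \emph{surgery} at many interior positions of $\pi$. In each hypothesised case, the data supply a short SAW-pattern $P$ in $G$ relating two vertices $v_0$ and $v_0'$ with $\ol v_0 = \ol{v_0'}$, together with an interchangeable alternative. Concretely: for type~1 the pattern is the edge $v_0\to\b v_0$ with $\b\in\sAz$; for type~3 the pattern is a shortest same-orbit SAW, of length $3$; for type~2(b)(i) the condition $|\pd v_0\cap\ol w|\ge 2$ supplies two parallel length-$2$ detours $v_0\to w\to v_0'$ and $v_0\to w''\to v_0'$ that can be swapped; for type~2(b)(ii) the witnessing SAW $v_0,w_1,\dots,w_l$ and its $\b$-translate play the role of two interchangeable routes, the non-fixing condition on $\b$ guaranteeing that they differ as walks in $G$. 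In all cases $P$ has bounded length and bounded vertex-support.

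At an interior vertex $v_i$ of $\pi$, the transitive action of $\sA$ provides a translate of $P$ based at $v_i$; call position $i$ \emph{admissible} if such a translate can be inserted or swapped in without violating the SAW property on $G$ and its support is disjoint from the rest of $\pi$. An admissible surgery produces a SAW on $G$ of length $|\pi|+O(1)$ which, in the insertion cases, is no longer o-distinct since the new interior vertex lies in $\ol v_i$; in the swap cases (types 2(b)(i) and (b)(ii)) the two options produce distinct SAWs on $G$ by the orbit/non-fixing hypothesis. Two admissible positions separated by more than $2|P|$ act on disjoint balls, so any subset $S$ of pairwise well-separated admissible positions yields a distinct SAW on $G$ of length $|\pi|+O(|S|)$. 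If a positive fraction of o-distinct $n$-SAWs admit $\Omega(n)$ such positions, summing over $S$ gives the desired $2^{\Omega(n)}$ surplus, and the strict inequality follows after controlling the bounded overcounting.

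The main obstacle is the required \emph{pattern lemma}: that most o-distinct $n$-SAWs admit many pairwise separated admissible positions. The classical proof in the lattice setting rests on geometric arguments unavailable in a general transitive graph. In their place I expect to combine the transitive $\sA$-action, the local finiteness of $G$, and a swap argument of Kesten type: if too many o-distinct SAWs had too few admissible positions, then iterated excision of the obstructing sub-walks would contradict the subadditive lower bound \eqref{1215} applied to $\vG$. The most delicate case is (b)(ii), where the non-fixing condition on $\b$ is needed precisely to prevent the two swap variants from coinciding in $G$ after an $\sA$-translation; this is where the normality of $\sAz$, via Remark~\ref{rem3}, plays its essential role.
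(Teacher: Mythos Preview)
Your high-level strategy matches the paper's: a Kesten-style entropic argument in which surgeries at many well-separated positions of a SAW on $\vG$ produce exponentially many SAWs on $G$. But the proposal has a genuine gap at exactly the point you flag as ``the main obstacle'': the pattern lemma. Your sketch (``iterated excision of the obstructing sub-walks would contradict the subadditive lower bound'') does not describe a mechanism that works here, and the paper's actual device is different and more delicate. The paper does \emph{not} try to show directly that most $\vG$-SAWs have many positions where a fixed pattern can be inserted. Instead it defines, for each $k$, the event $E_k$ that at least $k$ vertices of some translate of the projected cycle $\vell$ are visited by $\vpi$, and sets $\lambda_k=\lim_n \vsi_n(0,E_k)^{1/n}$. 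Trivially $\lambda_1=0$. Assuming for contradiction that $\lambda_{\Lambda}=\mu$ (where $\Lambda=|\ell_{v_0}|$), one finds the critical $k$ with $\lambda_k<\mu=\lambda_{k+1}$, and then the set $T_n$ of $\vG$-SAWs for which $E_{k+1}$ \emph{never} occurs but $E_k^m$ occurs at least $an$ times satisfies $|T_n|^{1/n}\to\mu$. The point is that the surgeries are performed on $T_n$, and they are guaranteed to succeed precisely \emph{because} $E_{k+1}$ never occurs: at each flagged position the cycle $\vell(\vpi_{j_t})$ has exactly $k<\Lambda$ vertices visited by $\vpi$, so there is room to re-route $\vpi$ around the full cycle. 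This $k/(k{+}1)$ bootstrap is the missing idea; your notion of ``admissible position'' presupposes what has to be proved.

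Two smaller points. First, for type~3 the shortest same-orbit SAW has length $\ge 3$, not exactly $3$; the paper handles arbitrary $\Lambda\ge 3$ and needs the auxiliary fact $w_r\ne\beta w_r$ for $1\le r<\Lambda$ (proved via the minimality of $\Lambda$ and \eqref{g708}) to ensure the lifted replacement is self-avoiding. Second, your description of the type-2 surgeries as ``swaps'' between two fixed alternatives is not quite what is done: the paper replaces the sub-SAW between $\pi_{\alpha_t}$ and $\pi_{\omega_t}$ by an explicit $4$-step SAW (with separate sub-cases according to the order in which $v$ and $w$ are visited and whether $\beta w=w$), and it is condition (b)(i) that supplies the extra neighbour needed in the $\beta w=w$ sub-case; condition (b)(ii) is used only to guarantee $w_r\ne\beta w_r$ when one takes $\ell_{v_0}$ to be the given length-$l$ SAW and then reruns the type-3 argument.
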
 

In the special case when $\mu(G)=1$, by
\cite[Thm 1.1]{GrL1} $G$  has degree $2$ and is therefore
the line $\ZZ$. It is easily seen that $\olV$ is finite, so that $\vmu = 0$.

The conclusion of Theorem \ref{si} is
generally invalid for \emph{quasi-transitive} graphs.  
Consider, for example, the graph $G$ of Figure \ref{fig:reflect},
with $\sAz = \{\id,\rho\}$ where $\rho$
is reflection in the horizontal axis.
Both $G$ and its quotient graph have connective constant $1$.

\begin{figure}[htbp]
\centering
\includegraphics*{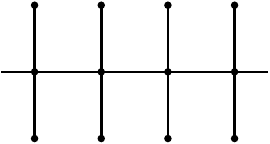}
\caption{The pattern is extended infinitely in both directions.}
\label{fig:reflect}
\end{figure}

Theorem \ref{si} is related to but distinct from the strict
inequalities of \cite{BGG,j04,SW} and \cite[Thm 8.2.1]{ms}, where
specific examples are considered of graphs that are not vertex-transitive.

Suppose $\sAz$ has type 2.
Condition (i) of Theorem \ref{si}(b)
holds if $\sAz$ is symmetric, since $|\pd w \cap \ol v| \ge 2$.
While symmetry is sufficient for
Theorem \ref{si}, we shall see in the next example that it is not necessary. 

\begin{example}\label{ex:trees}
Conditions (i)--(ii) are necessary in the type-$2$ case,
in the sense illustrated by the following example. Let $G$ be the 
infinite $3$-regular tree with a distinguished end $\om$.
Let $\sA$ be the set of automorphisms
that preserve $\omega$, and let $\sAz$ be the normal subgroup
generated by the interchange of two children of a given vertex $v$
(and the associated relabelling of their descendants). 
The graph $\vG$ is obtained from $\ZZ$ by placing two directed
edges between consecutive integers in one direction, and one directed
edge in the reverse direction.
The type of $\sAz$ is $2$.
It is easily seen that neither (i) nor (ii) holds,
and indeed $\mu(\vG) = \mu(G) = 2$. We develop this example as follows.

Let $k\ge 0$, and let 
$\sAz_k$ be the normal subgroup 
generated by $\sAz$
together with
the map that shifts $v$ to its ancestor $k$ generations earlier. 
Note that $\sAz_k$ has type $2$ for $k \ne 1$.
The case of $\sAz_1$ is trivial since $\vG$ has a
unique vertex. 

We have that $\sAz_2$ is symmetric, and condition (i) of Theorem \ref{si}(b)
applies.
In contrast, $\sAz_3$ is asymmetric (and therefore non-unimodular,
see \cite{Trof} and also \cite{Neb,SoW} as well as Lemma \ref{prop:welld}), and condition (ii) applies. 
In either case, $\vmu < \mu(G)$. 
The situation is in fact trivial since $\vG$ is a directed $k$-cycle with
two directed edges clockwise and one anticlockwise. Thus
$\vmu=0$ and $\mu(G)=2$. The same argument shows
$\vmu < \mu(G)$ in the less trivial case with $G$ the direct product of $\ZZ^d$ and the tree. 
\end{example}

A further example is provided at Example \ref{solattice}.

It is sometimes convenient to work with an undirected graph derived
from $\vG$. There are two such graphs, depending on whether or not
the multiplicities of edges are retained. The first is the simple graph,
denoted $\olG_0$, derived from $\vG$ by declaring two distinct
vertices $\ol v$, $\ol w$ to be adjacent if and only if there is a directed
edge between $\ol v$ to $\ol w$ (this property is symmetric in $\ol v$, $\ol w$).

The second such graph, denoted $\olG=(\olV,\olE)$, is a multigraph derived from
$\olG_0$ by retaining the multiplicities of parallel edges of $\vG$. 
If $\sAz$ is symmetric (in that \eqref{g804} holds), 
we obtain $\olG$ by placing $|\pd v \cap \ol w|$ (undirected) parallel edges
between each distinct pair  $\ol v,\ol w \in \olV$, and adding $|\pd v \cap \ol v|$
(undirected) loops at each $\ol v \in \olV$.

The relationship between $\vmu$ and the connective constants
of the undirected graphs derived
from $\vG$ is as follows.
It is easily seen that $\mu(\olG_0) \le \vmu$. The graph
$\olG$ is defined whenever $\sAz$ is symmetric, and in this case 
$\mu(\olG) =  \vmu$.

Finally, we discuss the assumption of normality in
Theorem \ref{si}. By Remark \ref{rem3},
this assumption may be replaced by the following: 
there exists an automorphism
group $\sA$ acting transitively on $G$, and a subgroup $\sAz$
whose partition of $V$ is $\sA$-invariant. 

\subsection{Unimodularity}\label{sec:unim}

The concept of unimodularity is relevant to the type of
$\sAz$, and we discuss this briefly in this optional section. First recall the \emph{stabilizer} 
$\Stab_v$ of a vertex $v$ given
at \eqref{stab0}.
As before, we say that $\g \in \sA$ \emph{fixes} vertex $v$ if $\g \in \Stab_v$.
Note that $|\Stab_vw| < \oo$ for $v,w\in V$,  since $G$ is locally finite and
all elements of $\Stab_vw$ are at the same distance from $v$.

Let $\Stabo_v = \Stab_v \cap \sAz$.
As shown in \cite{Trof} (see also \cite{BLPS,SoW}), when viewed as a topological group
with the usual topology, $\sAz$
is unimodular  if and only if 
\begin{equation}\label{g804}
|\Stabo_u v| = |\Stabo_v u|, \qquad u,v\in V.
\end{equation} 
Since all groups considered here are subgroups of $\Aut(G)$, we may follow
\cite[Chap.\ 8]{LyP} by \emph{defining} $\sAz$ to be \emph{unimodular} if
\eqref{g804} holds.

It turns out that $\sAz$ is symmetric whenever either it is
unimodular or it has type 3.

\begin{lemma}\label{prop:welld}
\mbox{}
\begin{letlist}
\item If $\sAz$ is unimodular, then it is symmetric (in that \eqref{sym} holds).
\item If $\sAz$ has type $3$, then $|\pd v\cap \ol v|=0$ for $v \in V$,
and $|\pd v \cap \ol w|=|\pd w\cap \ol v|=1$
whenever $\ol v\sim \ol w$. In particular, $\sAz$ is symmetric.
\end{letlist}
\end{lemma}

\begin{proof}
(a) Suppose $\sAz$ is unimodular, and let $\ol v, \ol w\in \olV$ with $\ol v \ne \ol w$. 
For $x,y\in V$, let 
$$
f(x,y) = 
\begin{cases} 1 &\text{if $x \in \ol v$, $y \in \ol w$, and $x \sim y$},\\
0 &\text{otherwise}.
\end{cases}
$$
The function $f$ is invariant under the diagonal  action of $\sAz$, in that
$f(\a x, \a y) = f(x,y)$ for $\a\in\sAz$. By the mass-transport principle
as enunciated in, for example, \cite[Thm 8.7]{LyP}, 
\begin{equation*}
 \sum_{w' \in \ol w} f(v,w') = \sum_{v' \in \ol v} f(v',w) \frac{|\Stabo_{v'}w|}{|\Stabo_w v'|},
\qquad v,w \in V.
\end{equation*}
Equation \eqref{sym} follows by \eqref{g804}.

\noindent
(b) When $\sAz$ has type 3, the claim is a 
consequence of  \eqref{g708}.
\end{proof}

\subsection{Two further results}\label{sec:two}

The proof of Theorem \ref{si}
does not appear to yield an explicit non-trivial upper bound
for the ratio $\vmu/\mu(G)$. One may, however, show the following
theorem, which is subject to the same assumptions as Theorem \ref{si}.

\begin{theorem}\label{thm:approx}
Suppose there exists a real sequence $(b_n: n=1,2,\dots)$,
each term of which may be calculated in finite time, 
such that $b_n \le \mu(G)$ and $\lim_{n\to\oo}b_n= \mu(G)$.
There exists an algorithm which terminates in finite time
and, on termination, yields a number $R=R(G,\sAz)<1$ such
that $\vmu/\mu(G) \le R$.
\end{theorem}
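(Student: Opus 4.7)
The plan is to combine a computable upper bound on $\vmu$ with the hypothesised lower bound on $\mu(G)$, and exploit the strict inequality $\vmu < \mu(G)$ already established in Theorem \ref{si}.

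First, I would observe that since $\sAz$ is a normal subgroup of $\sA$ and $\sA$ acts transitively on $G$, the quotient $\sA/\sAz$ acts transitively on the directed quotient graph $\vG$. Writing $\s_n(\ol v)$ for the number of directed $n$-step SAWs in $\vG$ starting at $\ol v$, this number is therefore independent of $\ol v$. Splitting a directed SAW of length $m+n$ after its first $m$ steps gives the submultiplicative estimate $\s_{m+n}(\ol v) \le \s_m(\ol v)\,\s_n(\ol v)$, so Fekete's lemma yields $\vmu = \inf_n \s_n(\ol v)^{1/n}$. In particular, $c_n := \s_n(\ol v)^{1/n}$ is an upper bound on $\vmu$ for every $n$, with $c_n \to \vmu$ as $n\to\oo$. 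Under an effective presentation of the pair $(G,\sAz)$, the ball of radius $n$ about $\ol v$ in $\vG$ is a finite directed graph constructible in finite time, and $\s_n(\ol v)$ is obtained by direct enumeration of the SAWs therein, so each $c_n$ is computable.

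The algorithm dovetails the computation of $c_n$ and $b_m$ for $n,m=1,2,\dots$, halting as soon as a pair is found with $c_n < b_m$, and then returning $R := c_n/b_m$. Since $\vmu \le c_n$ and $b_m \le \mu(G)$, the returned value satisfies
$$
\frac{\vmu}{\mu(G)} \le \frac{c_n}{b_m} = R < 1 .
$$
Termination is guaranteed by Theorem \ref{si}: choose $\eps \in \bigl(0, \tfrac12(\mu(G)-\vmu)\bigr)$; then for all sufficiently large $n$ one has $c_n < \vmu + \eps$, and for all sufficiently large $m$ one has $b_m > \mu(G) - \eps > \vmu + \eps > c_n$, so the stopping condition is eventually met.

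The main obstacle is the computability clause: the argument depends on $\vG$ being locally constructible, which presupposes that orbit-equivalence $\ol u = \ol v$ can be decided algorithmically for adjacent vertices $u,v \in V$. This is a mild hypothesis on the presentation of $(G,\sAz)$, satisfied for instance in the Cayley-graph applications of Section \ref{sec:cayley2} when the underlying group has a solvable word problem. Granted such an effectivity assumption, the remainder of the proof is routine: Fekete's lemma supplies the computable upper bound on $\vmu$, the hypothesis supplies the computable lower bound on $\mu(G)$, and the strict inequality $\vmu < \mu(G)$ from Theorem \ref{si} forces the algorithm to halt.
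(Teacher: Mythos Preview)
Your argument is correct and considerably more direct than the paper's. You exploit the vertex-transitivity of $\vG$ (under the action of $\sA$, using normality of $\sAz$) to obtain submultiplicativity of $\vsi_n$, so that $c_n := \vsi_n^{1/n}$ is a computable, non-increasing sequence with infimum $\vmu$; dovetailing this against the hypothesised $b_m \to \mu(G)$ and invoking the strict inequality $\vmu < \mu(G)$ of Theorem~\ref{si} forces termination.

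The paper proceeds differently: rather than appealing to the \emph{conclusion} of Theorem~\ref{si}, it revisits the \emph{proof}. The constants $R$ and $S$ appearing in the proof of Theorem~\ref{si} (via Lemmas~\ref{zl} and~\ref{zll}) are explicit functions of $\eps$, $m$, $\De$, $\modl$, and the paper shows how to compute admissible values of $\eps$ and $m$ in finite time by searching for the first $r$ with $\vsi_r(0,E_{\modl})^{1/r} < b_r(1-r^{-1})$, and so on. This yields a bound $R$ tied to the structural parameters of the proof machinery rather than to a raw enumeration of $\vsi_n$. Your route is more elementary and transparent; the paper's route has the virtue of exhibiting $R$ as an explicit function of quantities already arising in the proof of the strict inequality, which may be of independent interest. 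Both approaches rely on the same effectivity assumption you flag --- the ability to enumerate SAWs on $\vG$, hence to decide orbit-equivalence under $\sAz$ --- since the paper's algorithm also requires computing quantities such as $\vsi_r(0,E_{\modl})$.
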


If $\mu$ is known, we may set $b_n\equiv\mu(G)$. 
In certain other cases, such a sequence $(b_n)$ may be found; 
for example, when $G = \ZZ^d$, we may take $b_n$ to be the $n$th root
of the number
of $n$-step bridges from the origin (see \cite{HW62,hkI}).
Theorem \ref{thm:approx} is unlikely to be useful in 
practice since the algorithm in question relies on
successive enumerations of the numbers of $n$-step SAWs on 
$G$ and $\vG$.

A result similar to Theorem \ref{thm:approx} is valid in the context of Theorem \ref{thm:addedge} also.

The relationship
between the \emph{percolation} critical points of a graph $G$ and a 
version of the quotient graph $\olG $ is the topic of
a conjecture of Benjamini and Schramm \cite[Qn 1]{BenS96}. 
As observed above (see also \cite{BenS96}), it is not necessary 
for the definition of quotient graph
to assume that $\sAz$ is a \emph{normal} subgroup of $\sA$. 
However, \cite{BenS96} appears to make the additional assumption that $\sAz$
acts freely on $V$. 
This is a stronger
assumption than unimodularity.

Following \cite{BenS96}, we ask whether $\mu(\vG) < \mu(G)$ where $\vG = G/\sAz$,
under the weaker assumption that 
$\sAz$ is a non-trivial (not necessarily normal) subgroup of $\sA$ 
acting freely on $V$, such that
$\vG$ is vertex-transitive.
The proof of Theorem \ref{si}  may be adapted
to give an affirmative answer to this question under an extra condition,
as follows. An outline proof is included at the end of Section \ref{sec:si-proof}.

\begin{theorem}\label{thm:benschr}
Let $G$ be an infinite, locally finite graph on which the automorphism group $\sA$ acts
transitively. Let $\sAz$ be a non-trivial subgroup of $\sA$ acting freely on $V$,
such that the quotient
graph $\vG := G/\sAz$ is vertex-transitive.  Assume there exists $l \ge 1$ such
that $\vG$ possesses an $l$-cycle but $G$ does not. Then
$\mu(\vG) < \mu(G)$.
\end{theorem}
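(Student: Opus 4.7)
The plan is to adapt the Kesten pattern-theorem strategy already used in the proof of Theorem~\ref{si} to the present non-normal quotient setting. Since $\sAz$ acts freely and $\vG$ is vertex-transitive, fixing consistent edge-labels exactly as in Section~\ref{sec:quot} produces, for every vertex $v\in V$ with image $\ol v\in\olV$, a bijection between $n$-step SAWs on $\vG$ from $\ol v$ and those $n$-step SAWs on $G$ from $v$ whose projection to $\vG$ is self-avoiding. In particular $\si_n(\vG,\ol v)\le\si_n(G,v)$, and hence $\mu(\vG)\le\mu(G)$ holds already. The task is to upgrade this to a strict inequality.

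I would build a forbidden pattern $P$ by lifting an $l$-cycle $\ol v_0,\ol v_1,\dots,\ol v_{l-1},\ol v_0$ of $\vG$: starting from any $v_0\in\ol v_0$, lift the cycle to a walk $P=(v_0,v_1,\dots,v_l)$ in $G$. The orbits $\ol v_0,\dots,\ol v_{l-1}$ are pairwise distinct, so the vertices $v_0,\dots,v_{l-1}$ are also distinct, and $v_l=\a v_0$ for a unique $\a\in\sAz$. Because $G$ contains no $l$-cycle and $\sAz$ acts freely, one has $v_l\neq v_0$ and $\a\neq\id$, so $P$ is a genuine $l$-step SAW on $G$ whose endpoints lie in the same $\sAz$-orbit but are distinct. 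Any SAW $\pi$ on $G$ containing, as a consecutive subwalk, a suitable translate $(\g v_0,\g v_1,\dots,\g v_l)$ of $P$ therefore projects onto a walk on $\vG$ which revisits the orbit $\ol{\g v_0}$, and so is not self-avoiding on $\vG$.

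A Kesten-style pattern theorem then produces constants $a>0$ and $c<\mu(G)$ such that the number of $n$-step SAWs on $G$ containing fewer than $an$ disjoint translates of $P$ grows no faster than $c^n$. Combined with the preceding paragraph, this bounds $\si_n(\vG,\ol v)$ from above by $c^n$ for large $n$, whence $\mu(\vG)\le c<\mu(G)$, as required.

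The principal obstacle is implementing the pattern-theorem step in the present generality. One must verify that $P$ is a \emph{proper internal pattern} on $G$, meaning that $P$ can be embedded strictly inside arbitrarily long SAWs, so that $an$ disjoint translates of $P$ can be freely toggled on or off to inject $2^{an}$ distinct SAWs into the count on $G$. Transitivity of $\sA$ on $V$, together with a K\"onig-compactness argument, supplies the long extensions needed to host the disjoint copies. Since $\sAz$ is not assumed normal in $\sA$, an arbitrary $\sA$-translate of $P$ need not yield an orbit-collision after projection; the surgery must be performed using translates by the normalizer of $\sAz$ in $\sA$, or alternatively by $\sAz$-translates combined with the transitivity of $\Aut(\vG)$ on $\vG$ (which ensures that an $l$-cycle passes through every vertex of $\vG$, and hence that forbidden subwalks are abundant along any long SAW on $G$). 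Once these pattern-theorem prerequisites are secured, the strict inequality $\mu(\vG)<\mu(G)$ follows from the estimate above.
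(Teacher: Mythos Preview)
Your overall shape is right, and you have correctly located the central difficulty: since $\sAz$ is not assumed normal in $\sA$, an $\sA$-translate $\g P$ of your lifted path need not have its two endpoints in the same $\sAz$-orbit, so containing $\g P$ does not force an orbit-collision in the projection. This means your step ``a Kesten-style pattern theorem on $G$ $\Rightarrow$ bound on $\vsi_n$'' does not go through as written: the pattern theorem on $G$, run with the transitive group $\sA$, produces many occurrences of $\sA$-translates of $P$, but those are not the translates that are forbidden for lifted SAWs. Your two suggested repairs (restrict to the normalizer, or pass to $\Aut(\vG)$) are the right instincts, but neither is developed; the normalizer need not act transitively on $V$, and the second option really requires moving the whole counting argument to $\vG$.

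That second option is exactly what the paper does, and it dissolves the obstacle rather than patching it. One fixes a transitive subgroup $\sB\subseteq\Aut(\vG)$, takes $\vL(\ol w)$ to be the $\sB$-images of the chosen $l$-cycle $\vell_{\ol v_0}$, and runs the entire Kesten machinery of Lemmas~\ref{zl}--\ref{zll} on $\vG$: the events $E_k$, the $E_k^m$ localisation, the $\la_k$ dichotomy, and the surgery that winds a SAW $\vpi$ around a cycle $\vell$ and lifts the result to a SAW on $G$. No relationship between $\sA$ and $\sB$ is needed. The only place the hypotheses of Theorem~\ref{thm:benschr} enter is in checking that the lifted walk is self-avoiding on $G$: the absence of $l$-cycles in $G$ guarantees that every lift in $L(w)$ is a SAW, and the free action of $\sAz$ gives \eqref{g709} directly (if $\b\ne\id$ then $\b$ fixes no vertex, so $w_r\ne\b w_r$ for every $r$), which is precisely what the type-3 argument needed and what condition~(b)(ii) was engineered to supply in the normal case. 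For $l=2$ one is automatically in Case~2.1, again because $\b$ has no fixed points.

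A secondary point: you invoke ``a Kesten-style pattern theorem'' on $G$ as a black box, with a K\"onig argument to certify $P$ as a proper internal pattern. For general vertex-transitive graphs (as opposed to $\ZZ^d$) no such off-the-shelf theorem is available; the content of Lemmas~\ref{zl}--\ref{zll} in this paper is precisely a self-contained implementation of the Kesten mechanism in that generality, and it is that implementation---carried out on $\vG$---that does the work here.
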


\section{Strict inequalities for Cayley graphs}
\label{sec:cayley2}

We turn to the special case of Cayley graphs.
Let $\Ga$ be an infinite group
with a finite set $S$ of generators, 
where $S$ is assumed symmetric in that $S=S^{-1}$, 
and the identity $\id$ satisfies $\id \notin S$. Thus $\Ga$ has a presentation as
$\Ga = \langle S \mid R\rangle$ where $R$ is a set of relators.
The Cayley graph $G=G(\Ga,S)$ is the simple graph defined as follows.
The vertex-set $V$ of $G$ is the set of elements of $\Ga$. 
Distinct elements $g,h \in V$
are connected by an edge if and only if there exists $s \in S$ such that $h=gs$.
It is easily seen that $G$ is connected and vertex-transitive. 

The group $\Ga$ may be viewed as a subgroup of the automorphism
group of $G=(V,E)$ by: $\g \in \Ga$ acts on $V$ by  $v \mapsto \g v$. Thus, $\Ga$ acts transitively.

The reader is reminded of an elementary property of Cayley graphs, namely that
$\Ga$ acts freely on $V$.
This is seen as follows.
Suppose $\g \in \Stab_v$. For $w \in V$, there exists $h \in \Ga$ with $w=vh$,
so that $\g$ fixes every $w \in V$, whence $\g=\id$. 
See \cite{bab95} for a general account of
the theory of Cayley graphs, and \cite[Sect.\ 3.4]{LyP} for a brief account. 

Suppose a product $s_1s_2\cdots s_l$ of generators is a relator. 
The relation $s_1s_2\cdots s_l=\id$ corresponds to 
the closed walk $(\id,s_1, s_1s_2,\dots, s_1 s_2\cdots s_l=\id)$
of $G$ passing through the identity $\id$. 
Consider now the effect of adding a further relator.
Let $s_1,s_2,\dots,s_l \in S$ be such that 
$\rho := s_1s_2\cdots s_l \ne \id$, and write
$\Ga_\rho = \langle S \mid R\cup\{\rho\}\rangle$.
Then $\Ga_\rho$ is isomorphic to the quotient group $\Ga/\sN$ where
$\sN$ is the normal subgroup of $\Ga$ generated by $\rho$.
Let $G(\Ga_\rho,S)$ be the  Cayley graph of $\Ga_\rho$.

\begin{corollary}\label{caleydecrease}
Let $G=G(\Ga,S)$ be the Cayley graph of
the infinite, finitely-presented group $\Ga = \langle S\mid R\rangle$.
Let $\rho \in \Ga$, $\rho \notin R \cup \{\id\}$,
and let $G_\rho = G(\Ga_\rho,S)$.
The connective constants of $G$ and $G_\rho$ satisfy $\mu(G_\rho) <\mu(G)$.
\end{corollary}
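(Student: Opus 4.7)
The plan is to apply Theorem \ref{si} to the directed quotient graph $\vG := G/\sN$, where $\sN$ denotes the normal closure of $\rho$ in $\Ga$, and then identify the Cayley graph $G_\rho$ with the simple undirected graph $\olG_0$ underlying $\vG$. Taking the transitive automorphism group of Theorem \ref{si} to be $\Ga$ itself, acting on $V = \Ga$ by left multiplication, normality of $\sN$ in $\Ga$ is automatic, and $\sN$ is non-trivial because $\rho \ne \id$.

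First we verify that $G_\rho = \olG_0$ as graphs: the vertex sets agree as $\Ga/\sN = \Ga_\rho$, and by normality of $\sN$ one has $\sN g \sim \sN g'$ in $\olG_0$ iff $\sN g \ne \sN g'$ and $\sN g s = \sN g'$ for some $s \in S$, which is exactly the adjacency rule of the Cayley graph $G_\rho$. Any parallel directed edges of $\vG$ arising from distinct generators $s,s' \in S$ with $\sN g s = \sN g s'$ collapse in $\olG_0$, as they also do in the simple graph $G_\rho$.

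Next we verify the type-dependent hypothesis of Theorem \ref{si}. The action of $\Ga$ on itself by left multiplication is free (a standard property of Cayley graphs, recalled in the text preceding the corollary), so every stabilizer is trivial and \eqref{g804} holds vacuously. Hence $\sN$ is unimodular, and by Lemma \ref{prop:welld}(a) it is symmetric. If $\sN$ has type $1$ or $3$, Theorem \ref{si}(a) applies at once. If $\sN$ has type $2$, let $v_0, w, v'$ be a SAW in $G$ with $\ol v_0 = \ol v'$: then $v_0$ and $v'$ are two distinct vertices of $\pd w \cap \ol v_0$, so $|\pd w \cap \ol v_0| \ge 2$, and the symmetry of $\sN$ gives $|\pd v_0 \cap \ol w| \ge 2$, which is condition (i) of Theorem \ref{si}(b). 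In either case Theorem \ref{si} delivers $\mu(\vG) < \mu(G)$.

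To conclude, since $\sN$ is symmetric the multigraph $\olG$ is well-defined and satisfies $\mu(\olG) = \mu(\vG)$, while clearly $\mu(\olG_0) \le \mu(\olG)$; hence $\mu(G_\rho) = \mu(\olG_0) \le \mu(\vG) < \mu(G)$. The only step requiring real thought is the type-$2$ case, but the unimodularity which comes for free from the Cayley setting makes condition (i) of Theorem \ref{si}(b) immediate, so once Theorem \ref{si} is in hand the corollary reduces essentially to a careful translation between the algebraic data $(\Ga, \sN)$ and the graph-theoretic data $(G, \vG, \olG_0)$.
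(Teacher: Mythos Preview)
Your proof is correct and follows essentially the same route as the paper's: identify $G_\rho$ with the simple graph $\olG_0$ underlying $\vG = G/\sN$, use that the free action of $\Ga$ makes $\sN$ unimodular and hence symmetric (Lemma~\ref{prop:welld}(a)), and then invoke Theorem~\ref{si} together with the observation $\mu(\olG_0)\le\vmu$. The paper's proof is terser, leaving implicit both the verification of condition~(b)(i) in the type-$2$ case (which it notes elsewhere follows from symmetry) and the final chain of inequalities; your version simply spells these out.
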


A complementary result may be found at \cite[Thm 2]{Li-loc}.

\begin{proof}
The Cayley graph  $G_\rho$ is obtained from the quotient graph $G/\Ga_\rho$
by replacing every set of parallel edges by a single edge.
Since $\Ga_\rho$ acts freely on $V$, it is unimodular, and
the claim follows by Lemma \ref{prop:welld} and Theorem \ref{si}
(see also the statement about symmetry before Example \ref{ex:trees}).
\end{proof}

\begin{figure}[htbp]
\centering
\centering\includegraphics[width=0.5\textwidth]{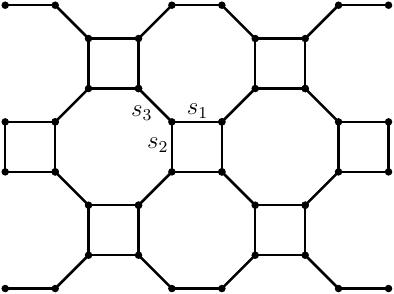}
\caption{The square/octagon lattice, also denoted $(4,8^2)$.}
\label{fig:so}
\end{figure}

\begin{example}\label{solattice}
The \emph{square/octagon} lattice, otherwise known as the Archimedean
lattice $(4,8^2)$, is illustrated in Figure \ref{fig:so}. It is the 
Cayley graph of the group with generator set 
$S=\{s_1,s_2,s_3\}$ and relators $\{s_1^2, s_2^2, s_3^2, 
s_1s_2s_1s_2,s_1s_3s_2s_3s_1s_3s_2s_3\}$.
The horizontal edges correspond to $s_1$, the vertical edges 
to $s_2$, and the other edges to $s_3$. Adding the further
relator $s_2s_3s_2s_3$, we obtain a graph isomorphic
to the ladder graph of
Figure \ref{fig:ladder-hex}, whose connective constant is the golden mean $\phi:=\frac12(\sqrt{5}+1)$. 

By Corollary \ref{caleydecrease},
the connective constant $\mu$ of the square/octagon lattice is strictly greater than 
$\phi = 1.618\dots$. The best lower bound currently known appears to
be $\mu > 1.804\dots$, see \cite{j04}.

We ask whether $\mu(G) \ge \phi$ for all infinite, vertex-transitive, simple,
cubic graphs $G$, see \cite{GrL1,GrL2}.
\end{example}

\begin{figure}[htb]
\centering
\includegraphics[width=0.4\textwidth]{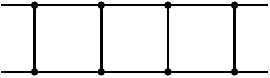}
\caption{The  (doubly-infinite) ladder graph has connective constant
the golden mean $\phi:=\frac12(\sqrt{5}+1)$; see \cite[p.\ 184]{AJ90} and \cite{GrL2}.}
\label{fig:ladder-hex}
\end{figure}

Our second inequality for Cayley graphs concerns the addition of a
generator. Let $\Ga =\langle S \mid R\rangle$ be a finitely-generated
group as above,
and let $w$ be a group element 
satisfying $w \notin S$ and $w \ne \id$. Let $\ol\Ga_w =
\langle S\cup\{w,w^{-1}\} \mid R\rangle$. 

\begin{corollary}\label{thm:thm4}
The connective constants of the Cayley graphs $G$, $\olG_w$ of
the above presentations of the groups $\Ga$, $\ol\Ga_w$ satisfy $\mu(G) < \mu(\olG_w)$.
\end{corollary}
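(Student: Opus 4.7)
The plan is a direct application of Theorem \ref{thm:addedge}, viewing $\olG_w$ as $G$ together with a transitive family of new edges indexed by the generator $w$. To set up the hypotheses of that theorem, I would take $\sA = \osA = \Ga$, acting on its own vertex set $V$ by left multiplication. This action is transitive and, by the definition of the Cayley graph $G$, it is a subgroup of $\Aut(G)$.

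Next I would verify that the same action is by automorphisms of $\olG_w$. The new edges of $\olG_w$ (not already present in $G$) are those of the form $\langle g, gw\rangle$ and $\langle g, gw^{-1}\rangle$. For $\g \in \Ga$, left multiplication sends $\langle g, gw\rangle$ to $\langle \g g, (\g g)w\rangle$, which is again a $w$-edge of $\olG_w$, and similarly for $w^{-1}$-edges; the original $S$-edges are preserved for the same reason. Hence $\osA = \Ga \subseteq \Aut(\olG_w)$.

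For the \fcp, I would invoke Theorem \ref{thm:pi}(a) with $\sA = \osA = \Ga$: $\osA$ is trivially a normal subgroup of $\sA$ acting (quasi-)transitively on $V$, so it has the \fcp. Equivalently, since $[\sA:\osA] = 1$ one may appeal to Theorem \ref{thm:pi}(b), or observe directly that $\osA\rho = V$, so $s=0$ with $\nu_0 = \id$ works in Definition \ref{def:fcp}. Finally, I must confirm that $E$ is a \emph{proper} subset of $\olE$: the edge $\langle \id, w\rangle$ belongs to $\olE$ because $w$ is a generator of $\olGa_w$, but it is absent from $E$, for if $\id \sim w$ in $G$ then $w = \id\cdot s = s$ for some $s\in S$, contradicting $w\notin S$. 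Theorem \ref{thm:addedge} now yields $\mu(G) < \mu(\olG_w)$.

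There is no substantive obstacle here: the theorem is essentially a repackaging of Theorem \ref{thm:addedge} in the Cayley-graph setting, and the entire verification reduces to the remarks that $\Ga$ acts by automorphisms of $\olG_w$ (because left multiplication commutes with right multiplication by any fixed element) and that the new generator $w$ contributes a genuinely new edge incident to the identity.
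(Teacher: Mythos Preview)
Your proof is correct and follows the same approach as the paper's own argument. The paper's proof is a one-liner---``This is an immediate corollary of Theorem~\ref{thm:addedge}, on noting that $\Ga$ acts transitively on both $G$ and $\olG_w$''---and you have simply spelled out the implicit verifications (that $\Ga \subseteq \Aut(\olG_w)$, that the \fcp\ holds trivially for $\osA=\sA=\Ga$, and that $E \subsetneq \olE$) in full detail.
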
  

\begin{proof}
This is an immediate corollary of Theorem \ref{thm:addedge}, on noting
that $\Ga$ acts transitively on both $G$ and $\olG_w$.
\end{proof}

In the special case
when $\mu(G)=1$, we have  that $G=\ZZ$ (as noted after Theorem \ref{si}).
Therefore, $\olG_w$ has degree either $3$ or $4$, whence $\mu(\olG_w)
\ge \sqrt 2$ by \cite[Thm 1.1]{GrL1}.

\begin{example}
Consider $\ZZ^2$ as the Cayley graph
of the abelian group with $S = \{a,b\}$
and $R=\{aba^{-1}b^{-1}\}$. Adding the generator $ab$
(and its inverse) amounts to adding a diagonal to each square of $\ZZ^2$.
Recall Example \ref{ex:sqt}. One may easily construct more interesting examples based on, 
for example, the square/octagon lattice of Figure \ref{fig:so}.
\end{example}

For both Corollary \ref{caleydecrease} and Corollary \ref{thm:thm4}, there exists an
algorithm which, under a certain condition,
terminates in finite time and, on termination, yields an explicit non-trivial
bound for the ratio of the connective constants under consideration.
This holds just as in Theorem \ref{thm:approx}, and the details
are omitted.

\section{Proofs of Theorems \ref{si} and \ref{thm:approx}}\label{sec:si-proof}

The proof of Theorem \ref{si} is
inspired by Kesten's proof of the pattern theorem in \cite{hkI} 
(see also \cite[Sect.\ 7.2]{ms}), and a
familiarity with the latter proof will aid the reader 
of the following proofs. The overall shape of the argument
from \cite{hkI} recurs more than once in this paper; at later occurrences 
we shall outline any necessary adaptation rather
than attempt to systematize the method. 
We begin with an elementary lemma. 

\begin{lemma}\label{lem:norm}
Let $v\in V$, and let $\sAz$ be a normal subgroup of $\sA$. 
We have that $\sAz\subseteq \Stab_v$ if and only if $\sAz$ is trivial.
\end{lemma}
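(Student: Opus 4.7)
The plan is to prove the two implications separately, with the forward direction being immediate. If $\sAz$ is trivial, then $\sAz=\{\id\}$, and since $\id \in \Stab_v$ (the identity fixes every vertex), we have $\sAz \subseteq \Stab_v$ at once.

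For the converse, I would suppose $\sAz \subseteq \Stab_v$ and show that every $\a \in \sAz$ fixes every vertex of $V$, whence $\a=\id$. The key idea is to exploit the transitive action of $\sA$ on $V$ together with the normality of $\sAz$ in $\sA$. Given an arbitrary $w \in V$, transitivity produces $\g \in \sA$ with $\g v = w$. Consider then the conjugate $\g^{-1}\a\g$: normality of $\sAz$ gives $\g^{-1}\a\g \in \sAz$, so by hypothesis $\g^{-1}\a\g \in \Stab_v$, i.e.
\[
(\g^{-1}\a\g)(v) = v.
\]
Applying $\g$ to both sides rearranges this to $\a(\g v) = \g v$, that is, $\a w = w$. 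Since $w \in V$ was arbitrary, $\a$ fixes every vertex, so $\a = \id$.

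I do not anticipate any real obstacle; the argument is a single conjugation trick combining transitivity with normality, and is essentially the standard fact that a normal subgroup contained in a point-stabilizer of a transitive action is contained in the kernel of that action. Nothing further about the geometry of $G$ or about $\sAz$ being symmetric, unimodular, etc., is needed.
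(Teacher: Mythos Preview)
Your proof is correct and is essentially the same argument as the paper's: both use transitivity to carry $v$ to an arbitrary $w$ via some $\g\in\sA$, and then normality of $\sAz$ to conclude that $\sAz$ fixes $w$ as well. The only cosmetic difference is that the paper phrases this at the level of orbits, writing $\sAz w=\sAz\g v=\g\sAz v=\{\g v\}=\{w\}$, whereas you phrase it element-wise via the conjugate $\g^{-1}\a\g$.
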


\begin{proof}
If $\sAz$ is trivial then $\sAz = \{\id\} \subseteq \Stab_v$. 
Conversely, suppose  $\sAz\subseteq \Stab_v$ and let $w\in V$. Since $G$ is vertex-transitive, 
there exists $\g\in \sA$ such that $\g v=w$. Since $\sAz$ is normal,
\begin{equation*}
\sAz w =\sAz \g v=\g\sAz v=\{\g v\}= \{w\}.
\end{equation*}
Therefore, $\sAz \subseteq \Stab_w$ for all $w \in V$, and hence $\sAz$ is trivial.
\end{proof}

Let $G$, $\sAz\subseteq \sA$, $\vG =G/\sAz$, etc,  be given as for Theorem \ref{si}, 
and fix $v_0\in V$.
Let $\Sigma_n$ (\resp, $\vSi_n$) be the set of $n$-step SAWs of $G$ 
(\resp, $\vG $) starting from $v_0$ (\resp, $\ol{v}_0$), and write  
$\sigma_n=|\Si_n|$ (\resp, $\vec\sigma_n=|\vSi_n|$).  

\begin{assumption}\label{assum}
We assume henceforth that either condition (a) of Theorem \ref{si} holds, 
or condition (b)(i).
An explanation of the sufficiency of condition (b)(ii) for type $2$  is given 
in Remark \ref{rem2} at the end of this proof.
\end{assumption}

Any walk $\pi$ on $G$ induces a walk $\vpi$ on $\vG$, and we say that
$\pi$ \emph{projects} onto $\vpi$.
For $\vpi \in \vSi_n$, there exists a SAW $\pi\in \Si_n$ that
projects onto $\vpi$, and we  say that
$\vpi$ \emph{lifts} to $\pi$. There are generally
many choices for such $\pi$, and we fix such a choice as 
explained after Lemma \ref{lem:welld}.
Let $\mu=\mu(G)$ and $\vmu =\mu(\vG )$. By the above remarks,
$\vmu \leq \mu$.

The idea of the proof is to replace certain sub-walks
of $\vpi\in\vSi_n$ by new walks that lift to SAWs on $G$. Such
replacements are given in terms of a certain SAW on $G$ that we introduce in
the next paragraph. We shall show that
`most' $n$-step SAWs on $\vG$ contain at least $An$ sub-walks with the
special property that they lift to at least two SAWs on $G$. This introduces a factor of $2^{An}$ in
the relative counts of SAWs on $\vG$ and $G$, and hence the required strict inequality
$\vmu < \mu$. The word `most' must be interpreted in terms of an exponential
growth rate, and the constant $A>0$ chosen appropriately.
The principal point  of divergence of the current proof from that of the pattern theorem
lies in the multiplicity of liftings: whereas the pattern theorem is combinatorial
at this point, the current proof is algebraic.

Let $\ell_{v_0}$ be a shortest
SAW from $v_0$ to some $w \ne v_0$ satisfying $\ol{v}_0=\ol{w}$. 
Such a walk exists by Lemma \ref{lem:norm} and the non-triviality of $\sAz$.
We consider $\ell_{v_0}$
as a directed walk from $v_0$ to $w$.
Let $\vell_{v_0}$ be the
projection of $\ell_{v_0}$ in $\vG$. Thus: if $G$ has type 1, 
$\vell_{v_0}$ is a loop;
if type 2, $\vell_{v_0}$ traverses a directed edge from $\ol v_0$ 
to $\ol w$, and 
then returns along a directed edge from $\ol w$ to $\ol v_0$;
if type 3, $\vell_{v_0}$ is a (directed) cycle of $\vG$ of length $3$ or more.

When $\sAz$ has type 2, we take $\ell_{v_0}$ to be the
path given in condition (b)(i) of the theorem.

For  $w\in V$, let $L_w$ (or $L(w)$) denote the set of all walks in $G$ 
starting from $w$ which are images of $\ell_{v_0}$ under elements of $\sA$, 
and let $\vL_{\ol w}$ (or $\vL(\ol w)$) be the corresponding collection of walks in $\vG$. 
Let $L= \bigcup\{L_w: w \in V\}$ and
$\vL= \bigcup\{\vL_{\ol w}: \ol w \in \ol V\}$.
All walks in $L$ have equal 
length,
written $\modl $. Thus, $\modl $ is the number
of distinct vertices in $\vell$. We shall refer to elements of $\vL$
as \emph{cycles}, and we shall use later the fact that the graph $G$, `decorated' with
members of $L$, is preserved under the action of $\sA$.

For $u \in V$ and a positive integer $r$, we define the 
\emph{ball} $B_r(u)=\{v\in V: d_G(u,v) \le r\}$.
Since  $G$ is  $\Delta$-regular,
\begin{equation}\label{200}
|B_r(u)|\leq 1+\Delta+\Delta(\Delta-1)+\dots+\Delta(\Delta-1)^{r-1}< \Delta^{r+1}.
\end{equation}

Let $\vpi$ be an $r$-step  SAW on $\vG $, 
and write $\vpi_0,\vpi_1,\dots,\vpi_r$ for the
vertices traversed by $\vpi$ in order. 
For  $k\in\mathbb{N}$, we say that $E_k=E_k(\vpi)$ occurs at the $j$th step 
of $\vpi$ if there exists $\vell \in \vL_{\vpi_j}$, denoted $\vell = \vell(\vpi_j)$,
such that at least $k$ vertices of
$\vell$
are visited by $\vpi$. 

Now let $\vpi$ be an $n$-step SAW on $\vG$. Let $j,m\in\NN$, and let $\vnu$
be the sub-SAW of $\vpi$ from $\vpi_{j-m}$ to $\vpi_{j+m}$.
(If $j-m<0$ or $j+m>n$, a modification must be made in this definition: 
if $j-m<0$, we take $\vnu$ from $\vpi_0$ to $\vpi_{j+m}$, 
and similarly if $j+m>n$.)
We say that $E_k^m$ occurs at the $j$th step of $\vpi$ 
if $E_k(\vnu)$ occurs at the $m$th step of $\vnu$. 
(If $j-m<0$, we require that $E_k(\vnu)$ occurs at the $j$th step, 
and similarly if $j+m>n$.) 
In particular, if $E_k^m$ occurs at the $j$th step of $\vpi$, then $E_k$ occurs at the $j$th 
step of $\vpi$. 

For $r\geq 0$, let $\vsi_n(r,E_k)$ (\resp, $\vsi_n(r,E_k^m)$) 
be the number of SAWs in $\vSi_n$ 
for which $E_k$ (\resp, $E_k^m$) occurs 
at no more than $r$ different steps. Observe that, for given $n$, $r$,
the count $\vsi_n(r,E_k^m)$ is non-increasing in $m$.

It is easily seen that  
$\vsi_{m+n}(0,E_k) \le \vsi_m(0,E_k)\vsi_n(0,E_k)$.
By the subadditive limit theorem, the limit
\begin{equation}\label{g1011}
\la_k := \lim_{n\to\oo} \vsi_n(0,E_k)^{1/n}
\end{equation}
exists and satisfies
\begin{equation}\label{1012}
\la_k \le \vsi_n(0,E_k)^{1/n}, \qquad n \ge 1.
\end{equation}
Thus, $\la_k < \mu$ if and only if
\begin{equation}\label{g1103}
\exists \eps>0,\, M\in\NN\text{ such that: } 
\vsi_m(0,E_k)<[\mu(1-\epsilon)]^m \text{ for } m\ge M.
\end{equation}

Our target in the next two lemmas is to show that $\la_\modl <\mu$,
and to deduce that, for suitable $a$, $m$, 
the number of $n$ step SAWs on $\vG$ in which $E_\modl^m$ occurs 
fewer than $an$ times
grows at a smaller exponential rate than the total number of SAWs.
Once the last is proved, the proof of the theorem
is fairly immediately completed in the manner of the sketch near the start of this section.

\begin{lemma}\label{zl} 
Let $k$ satisfy $1 \le k \le \modl $ and
\begin{equation}\label{g658}
\la_k<\mu.
\end{equation}
Let $\eps$, $M$ satisfy \eqref{g1103}, and let $m \ge M$ satisfy 
\begin{equation}\label{g1104}
\vsi_m \le [\mu(1+\eps)]^m.
\end{equation}
There exist $a=a(\eps,m)>0$ and $R=R(\eps,m) \in (0,1)$ such that 
\begin{equation}
\limsup_{n\to\infty}\vsi_n(an, E_k^m)^{1/n}<R\mu.
\label{czl}
\end{equation}
\end{lemma}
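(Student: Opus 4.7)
The plan is a block-counting argument modelled on Kesten's pattern theorem. I would fix $m\ge M$ satisfying the hypotheses, and for each $\vpi\in\vSi_n$ partition its positions into alternating \emph{blocks} and \emph{gaps} of $m$ edges each: with $N:=\lfloor n/(2m)\rfloor$, block $i$ is the sub-walk $\vpi[2mi,\,2mi+m]$ and gap $i$ is $\vpi[2mi+m,\,2m(i+1)]$, for $i=0,\dots,N-1$, leaving a terminal remainder of length less than $2m$. Call block $i$ \emph{good} if $E_k^m$ fails to occur at every position $j\in[2mi,\,2mi+m]$, and \emph{bad} otherwise; since these position-intervals are essentially disjoint, the hypothesis that $E_k^m$ occurs at no more than $an$ positions of $\vpi$ forces the number of bad blocks to be at most $an$.

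The key observation is that, for every position $j$ inside block $i$, the window $\vpi[j-m,\,j+m]$ that defines $E_k^m$ at step $j$ contains the entire block sub-walk $\vpi[2mi,\,2mi+m]$. Thus a good block has the property that, for each $j$ inside it and each $\vell\in\vL(\vpi_j)$, fewer than $k$ vertices of $\vell$ lie in $\vpi[j-m,\,j+m]$, and \emph{a fortiori} fewer than $k$ lie in the block sub-walk itself. In other words, a good block is an $m$-step SAW on $\vG$ at which $E_k$ occurs at no step; since $\sA$ acts transitively on $\vG$ and the family $\vL$ is $\sA$-invariant, the number of such sub-walks from any fixed vertex equals $\vsi_m(0,E_k)$. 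A bad block or a gap contributes at most $\vsi_m$ configurations from its starting vertex, and the remainder at most $\vsi_{2m}$. Discarding self-avoidance between pieces (an upper bound), summing over the $B\le an$ bad blocks, and using that the summand is monotone increasing in $B$ on this range yields
\begin{equation*}
\vsi_n(an,E_k^m)\le \vsi_{2m}(\lfloor an\rfloor+1)\binom{N}{\lfloor an\rfloor}\vsi_m(0,E_k)^{N-\lfloor an\rfloor}\vsi_m^{\lfloor an\rfloor+N}.
\end{equation*}

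Substituting $\vsi_m(0,E_k)<[\mu(1-\eps)]^m$ and $\vsi_m\le[\mu(1+\eps)]^m$ from \eqref{g1103}--\eqref{g1104}, using Stirling in the form $\binom{N}{an}\le(e/(2ma))^{an}$, taking $n$-th roots and letting $n\to\infty$ with $a$, $m$, $\eps$ held fixed so that $2mN/n\to 1$, I expect to obtain
\begin{equation*}
\limsup_{n\to\infty}\vsi_n(an,E_k^m)^{1/n}\le \mu\sqrt{1-\eps^2}\cdot\Bigl(\frac{1+\eps}{1-\eps}\Bigr)^{ma}\cdot\Bigl(\frac{e}{2ma}\Bigr)^{a}.
\end{equation*}
As $a\to 0^+$ the last two factors tend to $1$ (using $a\log a\to 0$), so the right-hand side approaches $\mu\sqrt{1-\eps^2}<\mu$; any sufficiently small $a=a(\eps,m)>0$ then delivers an $R=R(\eps,m)\in(\sqrt{1-\eps^2},1)$ satisfying \eqref{czl}. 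The only structural point requiring care is the choice of block length $m$: this is exactly the length for which every window $\vpi[j-m,\,j+m]$ with $j$ in the block contains the block, converting a good block into a sub-walk counted by $\vsi_m(0,E_k)$; a shorter block would leave windows sticking out and fail to give this direct bound.
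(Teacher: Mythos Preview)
Your proof is correct and follows essentially the same block-counting argument as the paper: both partition $\vpi$ into $m$-step pieces, observe that a piece with no occurrence of $E_k^m$ is counted by $\vsi_m(0,E_k)$ (because the window $[j-m,j+m]$ contains the piece), bound the remaining pieces by $\vsi_m$, and then choose $a$ small. The only difference is that your alternating block/gap structure is unnecessary---the paper simply uses $N=\lfloor n/m\rfloor$ \emph{consecutive} blocks of length $m$, since for any $j$ in the block $[(i-1)m,im]$ the window $[j-m,j+m]$ already contains the whole block; this doubles the number of usable good blocks and removes the extra factor $\vsi_m^N$ from the gaps, but the asymptotic conclusion is identical.
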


\begin{proof}
Assume $k$ is such that \eqref{g658} holds, and let $\eps$, $M$, $m$ satisfy
\eqref{g1103} and \eqref{g1104}.
Since $\vsi_m(0,E_k)=\vsi_m(0, E_k^m)$, 
\begin{equation}\label{g1105}
\vsi_m(0,E_k^m)<[\mu(1-\epsilon)]^m.
\end{equation}

Let $\vpi\in\vSi_n$ and  $N=\lfloor{n/m}\rfloor$. 
If $E_k^m$ occurs at no more than $r$ steps in $\vpi$, 
then $E_k^m$ occurs at no more than $r$ of the $N$ $m$-step subwalks
\begin{equation*}
(\vpi_{(j-1)m},\vpi_{(j-1)m+1},\dots,\vpi_{jm}),\qquad 1 \le j \le  N.
\end{equation*}
Counting the number of ways in which $k$ or fewer of these subwalks can contain an 
occurrence of $E_k^m$, 
we have by \eqref{g1104} and \eqref{g1105} that
\begin{align}
\vsi_n(r,E_k^m) 
&\leq \sum_{i=0}^{r} \binom Ni (\vsi_m)^i \vsi_m(0, E_k^m)^{N-i}\vsi_{n-Nm}\label{ccp}\\
&\leq \mu^{Nm} \vsi_{n-Nm} \sum_{i=0}^{r}\binom Ni
(1+\epsilon)^{im}(1-\epsilon)^{(N-i)m}.
\nonumber
\end{align}

It suffices to show that there exist $\zeta>0$ and $t<1$, depending on
$\epsilon$, $m$ only,  such that
\begin{equation}
\vsi_n(\zeta N, E_k^m)^{{1}/{N}}<t\mu^m, \label{csp}
\end{equation}
for all sufficiently large $n$, since this yields \eqref{czl} with $0<a<\zeta/m$ and $R=t^{1/m}$. 
For $\zeta$  small and positive,
\begin{multline}\label{ccp2}
\sum_{i=0}^{\zeta N} \binom Ni (1+\epsilon)^{im}(1-\epsilon)^{(N-i)m}\\
\leq (\zeta N+1) \binom N {\zeta N} \left(\frac{1+\epsilon}{1-\epsilon}\right)^{\zeta Nm}
(1-\epsilon)^{Nm}.
\end{multline}
The $N$th root of the right side converges as $N\to\infty$ to 
\begin{equation*}
\frac{1}{\zeta^{\zeta}(1-\zeta)^{1-\zeta}}
\left(\frac{1+\epsilon}{1-\epsilon}\right)^{\zeta m}(1-\epsilon)^m,
\end{equation*}
which is strictly less than $1$ for $0<\zeta<\zeta_0$, and some $\zeta_0=
\zeta_0(\epsilon, m)>0$. 
Combining this with \eqref{ccp}, we obtain \eqref{csp} for $0<\zeta<\zeta_0$, suitable $t=t(\eps,m)$, and 
$n$ sufficiently large.
\end{proof}

\begin{lemma}\label{zll}
We have that
$\la_{\modl } < \mu$.
\end{lemma}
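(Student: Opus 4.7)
The plan is to reduce $\lambda_\modl < \mu$ to a pattern-avoidance bound for SAWs on $G$, via the lift operation.

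\textbf{Reduction step.} For any $\vpi \in \vSi_n$, I claim the lift $\pi \in \Si_n$ contains no sub-walk in $L$. Indeed, suppose $\pi$ contained a sub-walk of the form $\g\ell_{v_0}$ (with $\g\in\sA$, $\g v_0 = \pi_j$) starting at position $j$. Then by normality of $\sAz$ in $\sA$ together with $\ol{v_0} = \ol{v_\modl}$,
$$
\ol{\pi_{j+\modl}} = \ol{\g v_\modl} = \g\ol{v_\modl} = \g\ol{v_0} = \ol{\pi_j},
$$
so $\vpi_j = \vpi_{j+\modl}$, contradicting the SAW property of $\vpi$ on $\vG$. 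Writing $\si_n^\ro$ for the number of $n$-step SAWs on $G$ from $v_0$ that contain no sub-walk in $L$, and using that lifts are an injection, we deduce
$$
\vsi_n(0, E_\modl) \le \vsi_n \le \si_n^\ro.
$$

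\textbf{Pattern theorem on $G$.} It therefore suffices to prove $\limsup_n (\si_n^\ro)^{1/n} < \mu(G)$. I would follow a Kesten-style insertion scheme: for a typical $\pi\in \Si_n^\ro$, one shows that a positive density of positions $j$ admit the insertion of some $\sA$-translate of $\ell_{v_0}$ as a detour beginning at $\pi_j$, producing exponentially many distinct SAWs on $G$ of length $n+\modl$, each containing an $L$-sub-walk. Since these extensions inject into the count of SAWs of length $n+\modl$ and $\si_{n+\modl}\le C\mu^{n+\modl}$, a standard entropic comparison forces $\si_n^\ro \le C'(\mu-\de)^n$ for some $\de>0$.

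\textbf{Main obstacle.} The pattern-theorem step is the principal difficulty. Kesten's original proof exploits $\ZZ^d$-geometry for insertions, but in the general vertex-transitive setting one must work algebraically with the transitive $\sA$-action and the fine structure of $\ell_{v_0}$ furnished by Assumption~\ref{assum}. In types~1 and~3, the minimality of $\ell_{v_0}$ among orbit-closing SAWs precludes immediate self-intersection upon insertion; in type~2 with condition~(b)(i), the hypothesis $|\pd v_0 \cap \ol w|\ge 2$ supplies a parallel lift of the base edge of $\vell_{v_0}$ needed to realize the insertion. Constructing a uniform, $\sA$-equivariant insertion scheme that works across all three types, and verifying that the inserted translates are far enough from the existing walk to avoid collisions, is the crux of the argument.
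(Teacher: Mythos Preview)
Your reduction step is correct: lifts of SAWs on $\vG$ indeed contain no consecutive sub-walk from $L$, so $\vsi_n(0,E_{\modl})\le\vsi_n\le\si_n^\ro$. But the proposed pattern-theorem step on $G$ has a genuine obstruction that you have not confronted. The walk $\ell_{v_0}$ is \emph{not closed in $G$}: its endpoints $v_0$ and $w$ are distinct (only their $\sAz$-orbits coincide). Hence ``inserting $\g\ell_{v_0}$ as a detour at $\pi_j$'' does not return to $\pi_j$; it lands at $\b\pi_j$ for some $\b\in\sAz$, and to continue one must translate the tail $(\pi_j,\dots,\pi_n)$ by $\b$. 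For a \emph{general} $\pi\in\Si_n^\ro$ there is no reason the shifted tail $(\b\pi_{j+1},\dots,\b\pi_n)$ avoids the initial segment $(\pi_0,\dots,\pi_{j-1})$: two vertices $\pi_i$, $\pi_k$ with $i<j<k$ may lie in the same $\sAz$-orbit, and then $\pi_i=\b\pi_k$ is entirely possible. Your reduction has thrown away exactly the information that would prevent this collision---namely, that lifts of $\vG$-SAWs have all vertices in distinct $\sAz$-orbits. (Restricting to such lifts would fix the insertion, but then you are back to working with $\vSi_n$ and the argument collapses into the paper's.)

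The paper's proof stays on $\vG$, where $\vell_{v_0}$ \emph{is} a cycle and insertion is geometrically sensible. It does not try to bound $\si_n^\ro$. Instead it argues by contradiction: assuming $\la_{\modl}=\mu$, the trivial fact $\la_1=0$ and monotonicity give a threshold $1\le k<\modl$ with $\la_k<\mu$ and $\la_{k+1}=\mu$. Via Lemma~\ref{zl} this produces a set $T_n\subseteq\vSi_n$ with $|T_n|^{1/n}\to\mu$ in which each $\vpi$ visits exactly $k$ vertices of many translated cycles $\vell$. The substitution replaces the portion of $\vpi$ between its first and last visits to such a cycle by a full traversal of that cycle; the resulting walk on $\vG$ is not self-avoiding, but its lift to $G$ \emph{is} a SAW---precisely because the vertices of $\vpi$ were in distinct orbits, so tail-shifting by an element of $\sAz$ cannot cause head/tail collisions. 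The role of the type analysis and condition~(b)(i) is narrower than you suggest: it is used only to check that the inserted cycle itself lifts to a SAW in $G$ (equations \eqref{g709}, \eqref{g800}, \eqref{g801}), not to control interactions with the rest of the walk. A counting comparison against $\sum_i\si_i$ then yields $\limsup|T_n|^{1/n}\le S\mu$ with $S<1$, contradicting $|T_n|^{1/n}\to\mu$.
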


\begin{proof}

Since each vertex $\pi_j$ is visited by  $\pi$, 
\begin{equation}\label{g701}
\vsi_n(0, E_1)=0, \qquad n \ge 1.
\end{equation}

\noindent
\emph{Case 1. If $G$ has type $1$}, then $\modl =1$, 
and $\la_{\modl }=0$ by \eqref{g701}.

\smallskip
\noindent
\emph{Cases 2/3. 
	Assume $G$ has type $2$ or $3$}, and that the lemma is false in that 
	\begin{equation}
	\la_{\modl } =\mu.
	\label{en1}
	\end{equation}
Now, $\vsi_n(0,E_k)$ (and hence $\la_k$ also)
is non-decreasing in $k$.
By \eqref{g701}, we may choose $k$ with $1\leq k<\modl  $
such that
\begin{equation}\label{ien}
\la_k<\mu, \quad \la_{k+1} = \mu.
\end{equation}
Let $\eps$, $M$ satisfy \eqref{g1103}, and let $m \ge M$ satisfy
\eqref{g1104}. By Lemma \ref{zl}, there exists $a=a(\eps,m)>0$ such that
\begin{equation}\label{g710}
\limsup_{n\to\infty}\vsi_n(an,E_k^m)^{{1}/{n}}<\mu.
\end{equation}

Let $T_n$ be the subset of $\vSi_n$ comprising
SAWs for which $E_{k+1}$ never occurs but $E_k^m$
occurs at least $an$ times.
We have that
\begin{equation*}
|T_n|\geq \vsi_n(0,E_{k+1})-\vsi_n(an,E_k^m),
\end{equation*}
whence, by \eqref{ien} and \eqref{g710}, 
\begin{equation}
\lim_{n\to\infty}|T_n|^{{1}/{n}}=\mu.
\label{tl}
\end{equation}
Thus,  under \eqref{en1},
for \lq most' SAWs $\vpi$, there exist
many  cycles in $\ol L$ having exactly 
$k$ ($< \modl $) vertices visited by $\vpi$ and none with more than $k$
such vertices. 
The rest of the proof is devoted to showing  the existence of $S=S(\eps,m,\De,\modl )<1$ such that
\begin{equation}\label{g11055}
\limsup_{n\to\infty}|T_n|^{{1}/{n}} \le S\mu.
\end{equation}
This contradicts \eqref{tl}, and the claim follows.

The idea is as follows.
Let $\vpi\in T_n$, and consider the 
set of cycles $\vell(\vpi_j)$ with exactly $k$ vertices visited by $\vpi$. 
Where such a cycle is met by $\vpi$,
we may augment $\vpi$ with an entire copy of it. 
The ensuing transformation is not one--one, and the length of 
the resulting walk 
differs from that of $\vpi$. By selecting the
places where the new elements of $\vL$ are added, 
we shall show that the number of resulting walks exceeds $|T_n|$ by 
an exponential factor. 
It is key that such augmented walks lift to SAWs on $G$ 
while traversing cycles in $\vG $, and thus we shall contradict \eqref{tl}.

Let $\vpi\in T_n$, so that $\vpi$ contains at least $an$ occurrences of 
$E_k^m$. We can find $j_1<\dots<j_u$ 
with $u=\lfloor \kappa n\rfloor -2$ where
\begin{equation}\label{g459}
\kappa =\frac{a}{(2m+2)\Delta^{2\modl  +1}},
\end{equation}
such that 
\begin{equation}\label{cd-1}
\mbox{$E_k^m$ occurs at the $j_1$th, $j_2$th, $\dots$, $j_u$th steps of $\opi$},
\end{equation}
(and perhaps at other steps as well), and in addition
\begin{gather} 
0<j_1-m,\quad j_u+m<n,\label{cd0}\\
j_{t}+m<j_{t+1}-m,\qquad 1 \le t< u,
\label{cd1}\\
\forall \vell_t \in \ol L(\vpi_t), \text{ the $\vell_1, \vell_2, \dots, \vell_u$  
are pairwise vertex-disjoint}.
\label{cd2}
\end{gather}
Such $j_t$ may be found by the following iterative construction.
First, $j_1$ is the smallest $j>m$ such that $E_k^m$ occurs at the $j$th step of $\vpi$.
Having found $j_1,j_2,\dots,j_r$, let 
$j_{r+1}$ be the smallest $j$ such that
\begin{numlist}
\item $j_r+m < j -m$,
\item every element of $\vL (\vpi_j)$ is disjoint from every element of 
$\vL(\vpi_{j_1}),\amb \vL(\vpi_{j_2}), \dots, \vL (\vpi_{j_r})$,
\item $E_k^m$ occurs at the $j$th step of $\vpi$.
\end{numlist}
Condition 1 gives rise to the factor $2m+2$ in the denominator of \eqref{g459},
and, by \eqref{200}, condition 2 gives rise to the factor $\De^{2k+1}$
($\le \De^{2\modl +1}$).

Let $t \in \{1,2,\dots,u\}$.
Since $E_k^m$ but not $E_{k+1}$ occurs at the $j_t$th step, 
$\vpi$ visits at most $k$ vertices in each cycle of $\vL(\vpi_{j_t})$.
Let $\vL(\vpi_{j_t},k)$ be the subset
of $\vL(\vpi_{j_t})$ containing all such cycles with 
exactly $k$ vertices visited by $\vpi$, and such that these $k$ vertices lie between
$\vpi_{j_t-m}$ and $\vpi_{j_t+m}$ on $\vpi$.
Choose a specific cycle $\vell(\vpi_{j_t}) \in \vL(\vpi_{j_t},k)$.
For $t=1,2,\dots,u$, let
\begin{equation}\label{g123}
\alpha_t=\min\{i:\vpi_i \in \vell(\vpi_{j_t})\},\quad
\omega_t  =\max\{i:\vpi_i \in \vell(\vpi_{j_t})\},
\end{equation} 
so that
\begin{equation*}
j_t-m\leq \alpha_t\leq j_t\leq \omega_t\leq j_t+m,\qquad 1 \le t \le u.
\end{equation*}

We describe next the strategy for replacement of the subwalk 
$(\vpi_{\alpha_t},\allowbreak \vpi_{\a_t+1},\dots, \vpi_{\omega_t})$.  
Starting from $\vpi_{\alpha_t}$, the new walk winds once around the cycle 
$\vell := \vell(\vpi _{j_t})$; having returned to the vertex  $\vpi_{\alpha_t}$, 
it continues around the same cycle $\vell$ until it reaches 
the vertex $\vpi_{\omega_t}$. 
This new subwalk is inserted into $\vpi$ at the appropriate place.
The resulting walk $\wt{\pi}^{(t)}$ is evidently not self-avoiding in $\vG$ 
(it includes a unique cycle, namely $\vell$ in some order),
but we shall see that it lifts to a SAW $\pi_\ast^{(t)}$ on $G$. 
The precise definition and properties of $\wt\pi^{(t)}$ 
and $\pi_*^{(t)}$ are described next.

Suppose $\vpi \in T_n$ lifts to $\pi\in \Si_n$.
The initial segment $(\vpi_0, \dots, \vpi_{j_t})$ 
of $\vpi$ lifts to a  SAW of $G$
that traverses the vertices $\pi_0,\pi_1,\dots,\pi_{j_t}$.
We write $v := \pi_{j_t}$, and we shall consider graphs of type 
2 and 3 separately.

\smallskip
\noindent
\emph{Case 3. Assume that $G$ has type 3}, 
and think of $\vell$ as a rooted, directed cycle of $\vG$ with root $\ol v$. 
The cycle $\vell$  lifts to 
the SAW $\ell := \ell_v$ of $G$  traversing the vertices
$v\,(=w_0),w_1,w_2,\dots, w_{\modl }$. We have that
\begin{equation}\label{g707}
v\, (=w_0),w_1,\dots, w_{\modl -1} \text{ belong to different equivalence classes}
\end{equation}
of $(V, \approx)$, and we may choose $\b\in\sAz$ such that $w_{\modl } = \b v$.

We prove next that
\begin{equation}\label{g709}
w_r \ne \b w_r, \qquad 1 \le r < \modl.
\end{equation}
Suppose first that $w_1 = \b w_1$,
and consider the walk
$$
v\, (=w_0), w_1, \dots, w_{\modl }\, (=\b v), \b w_1\, (=w_1).
$$
By the triangle inequality,
\begin{equation}\label{g812}
\modl  = d_G(v, \b v) \le d_G(v,w_1) + d_G(\b w_1, \b v) = 2,
\end{equation}
a contradiction since $\modl  \ge 3$. 
Applying the same argument to the walk
$$
w_1,w_2,\dots,w_{\modl }\,(=\b v), \b w_1, \b w_2,
$$
we obtain by \eqref{g708} that 
$w_2 \ne \b w_2$, and \eqref{g709} follows by iteration.

Suppose 
$\vpi_{\a_t}$ lifts
to vertex $x \in V$ with $\ol x= \ol w_i$ and $i \ge 1$;
similarly, suppose $\vpi_{\om_t}=\ol w_j$.
We show next that the  replacement of the subwalk 
$(\vpi_{\alpha_t},\dots, \vpi_{\omega_t})$
lifts to some SAW of $G$. 
Find $\g \in \sAz$ such that $x = \g w_i$.

\smallskip
\noindent
\emph{Case 3.1. Suppose $i<j$.}
Consider the walk
\begin{equation}\label{g706}
x\, (=\,\g w_i), \g w_{i+1}, \dots, \g w_{\modl }\,(=\g \b v),  \g \b w_1, \dots, \g \b w_i,
\end{equation}
followed by $\g\b w_{i+1},\g\b w_{i+2},\dots,\g\b w_j$.
By \eqref{g707}, two  vertices of this walk are equal if and only if there
exists $r$ such that $1\le r < \modl $ and $w_r = \b w_r$. 
By \eqref{g709}, this does not occur.
We have proved that $(\vpi_0,\dots,\vpi_{\a_t})$,
followed by the above walk, lifts to a SAW $\nu$ on $G$.
Thus, $\wt\pi^{(t)}$ lifts to the SAW $\nu$ followed by the image of $(\pi_{\om_t},\dots,\pi_n)$
under the map that sends $\pi_{\om_t}$ to $\g\b w_j$.
Since $\vpi \in T_n$, $\wt\pi^{(t)}$  lifts to a SAW $\pi_*^{(t)}$ of $G$.

\smallskip
\noindent
\emph{Case 3.2. Suppose $i > j$.}
Consider the walk
\begin{equation}\label{g706b}
x\, (=\,\g w_i), \g w_{i-1}, \dots, \g w_{0}\,(=\g  v),  \g \b^{-1} w_{\modl -1}, \dots, \g \b^{-1} w_i,
\end{equation}
followed by $\g\b^{-1} w_{i-1},\g\b^{-1}w_{i-2},\dots,\g\b^{-1} w_j$.
By \eqref{g709}, this is a SAW on $G$, and the step is completed as above.

\smallskip
\noindent
\emph{Case 2. Assume that $G$ has type $2$,} so that $\modl =2$ and $k=1$.
The required argument is slightly different 
since \eqref{g812} is no longer a contradiction. 
Let $j_t$ be as above, and let $\vell =
\vell(\vpi_{j_t}) \in \vL(\vpi_{j_t})$ 
(with corresponding $\ell\in L$)
be a witness to the occurrence
of $E_1^m$ at $\opi_{j_t}$. As in \eqref{g123},
\begin{equation}
\begin{aligned}\label{g123+}
\alpha_t=\a_t(\vpi)&=\min\{i:\vpi_i \in \vell\},\\
\omega_t =\om_t(\vpi) &=\max\{i:\vpi_i \in \vell\}.
\end{aligned} 
\end{equation}

Let $\pi$ be the lift of $\vpi$ to a SAW in $G$ from $v_0$, and
write $v=\pi_{j_t}$.
Recall that $\ell$ is a SAW visiting $v,w,\b v$ in $G$
for some $\b=\b_t \in \sAz$ with $\b v \ne v$, and the pair $v$, $w$ are visited  
(in some order) at the $\a_t$th and $\om_t$th steps of $\pi$. 
We may assume that $\b w\neq v$, 
since otherwise $w$ is adjacent
to $\b w$, and $G$ is of type 1.
We describe next the required substitution.

\smallskip
\noindent
\emph{Case 2.1. Suppose that $\b w\neq w$.}
As in Cases 3.1 and 3.2 above, both $v,w,\b v,\b w$ 
and $w,v,\b ^{-1}w,\b^{-1}v$ 
are SAWs on $G$. We write $\pi_*^{(t)}$ for the SAW on 
$G$ obtained by replacing the 
segment  of $\pi$ between $\pi_{\a_t}$ and $\pi_{\om_t}$
by
\begin{align*}
v,w,\b v,\b w\quad &\text{if $\pi$ 
 visits $v$ before $w$},\\
	 w,v,\b ^{-1}w,
 \b^{-1}v \quad&\text{if $\pi$ visits $w$ before $v$},
 \end{align*}
and the walk after $\b w$ (\resp, $\b^{-1}v$) 
by the image under $\b$ (\resp, $\b^{-1}$) 
of $\pi$ after $w$ (\resp, $v$).

\noindent
\emph{Case 2.2.1. Suppose that $\b w=w$, 
and $\pi$ visits $w$ before $v$.} Only in the following 
shall we use the assumed condition (b)(i) of Theorem \ref{si}.
Since $|\pd v \cap \ol w|, |\pd w \cap \ol v| \ge 2$, there exist $w' \in \ol w$
and $v' \in \ol v$ such that 
\begin{equation}\label{g800}
w,v,w',v'
\end{equation}
is a SAW of $G$. Let $\g \in \sAz$ be such that $v' = \g v$.
We write $\pi_*^{(t)}$ for the SAW on $G$ obtained 
by replacing the segment of $\pi$ between $\pi_{\alpha_t}$
and $\pi_{\omega_t}$ by \eqref{g800},
and the walk after $v'$ ($= \g v$) by the image under $\g$ 
of $\pi$ after $v$.

\smallskip
\noindent
\emph{Case 2.2.2. Suppose that $\b w = w$, and 
$\pi$ visits $v$ before $w$.}
Since $|\pd v \cap \ol w| \ge 2$, there exists $w'\in \ol w$ such that
\begin{equation}\label{g801}
v,w,\b v,w'
\end{equation}
is a SAW on $G$. We find $\g \in \sAz$ such that $w'= \g w$, 
and we write $\pi_*^{(t)}$ for the SAW on $G$ obtained by replacing the segment of $\pi$ between $\pi_{\alpha_t}$ and
$\pi_{\omega_t}$ by \eqref{g801},
and the walk after $w'$ ($=\g w$) by the image 
under $\g$ of $\pi$ after $w$.

\smallskip

This ends the definitions of the required substitutions, 
and we consider next their enactment. 
Let $\delta>0$, to be chosen later, and set $s=\delta n$.
(Here and later, for simplicity of notation, 
we omit the integer-part symbols.)
Let $H=(h_1,h_2,\dots,h_s)$ be an ordered subset of
$\{j_1,j_2,\dots,j_u\}$. We shall make an appropriate substitution 
in the neighbourhood of each $\vpi_{h_t}$, by an iterative
construction.

We consider the cases $t=1,2,\dots,u$ in order. 
First, let $t=1$. If $j_1 \notin H$, we do nothing, and we set $\pi^{(1)}=\pi$.
If $j_1 \in H$, we make the appropriate substitution around
the point $\vpi_{j_1}$, and write $\pi^{(1)}$ for the 
resulting SAW on $G$.
Now let $t=2$. Once again, if $j_2 \notin H$, we do
nothing, and set $\pi^{(2)} =\pi^{(1)}$. Otherwise, let $\a_2'=\a_2(\vpi^{(1)})$,
$\om_2'=\om_2(\vpi^{(1)})$.
The sub-SAW of $\pi^{(1)}$ between steps $\alpha'_2$ and $\omega'_2$ 
is the image of $\pi$ 
between steps $\alpha_2$ and $\omega_2$ under some 
$\g'\in\sAz$.  
Therefore, we may perform operations on $\pi^{(1)}$ 
after the $\a_2'$th step
as discussed above for the $\a_2$th step of $\pi$,
obtaining thus a SAW $\pi^{(2)}$.
This process may be iterated to obtain a sequence 
$\pi^{(1)},\pi^{(2)},\dots,\pi^{(u)}$ of SAWs on $G$, and we set $\pi_*=\pi^{(u)}$.
The role of $H$ is emphasized by writing $\pi_*=\pi_*(\vpi,H)$.
It follows from the construction that $\pi_*(\vpi,H) \ne \pi_*(\vpi, H')$
if $H \ne H'$.

One small issue arises during the iteration, namely that
the projections of the SAWs $\pi^{(t)}$ are not generally 
self-avoiding.  However, by \eqref{cd0}--\eqref{cd2}, 
the subwalk of $\vpi$ under inspection at any given 
step is disjoint from all previously inspected subwalks, and
thus the current substitution is unaffected by the past. 

By \eqref{cd2} and the discussion above, $\pi_*$ is self-avoiding on $G$. 
By inspection of such $\pi_*$ and its projection,
one may reconstruct the places at which cycles have been added to $\vpi$.
The length of $\pi_*$ does not exceed $n+2\modl s$.

We estimate the
number of pairs $(\vpi,H)$ as follows. First, the number $|(\vpi,H)|$
is at least the cardinality of $T_n$ 
multiplied by the minimum number of possible choices of $H$ 
as $\vpi$ ranges over $T_n$. 
Any subset of $\{j_1,j_2,\dots,j_{u}\}$ 
with cardinality $s=\delta n$ 
may be chosen for $H$,
whence  
\begin{equation}
|(\vpi,H)| \geq |T_n|\binom{\kappa n-2}{\delta n}.
\label{lb0}
\end{equation}

We bound $|(\vpi,H)|$ above by counting the 
number of SAWs $\pi_*$ of $G$ 
with length not exceeding $n+2\modl  \delta n$,
and multiplying by an upper bound for the number of pairs $(\vpi, H)$
giving rise to a particular $\pi_*$. The number of possible choices
for $\pi_*$ is no greater than $\sum_{i=0}^{n+2\modl \delta n} \sigma_i$.
A given $\pi_*$ contains $|H|=\delta n$ 
elements of $L$. 
At the $t$th such occurrence, 
$\vpi_{h_t}$ is a point on the corresponding cycle, 
and there are no more than
$2\modl  $ different choices for $\vpi_{h_t}$. For given $\pi$ and 
$(\vpi_{h_t}: t=1,2,\dots,s)$,  there are at most 
$\left(\sum_{i=1}^{2m}\vsi_i\right)^{\delta n}$ corresponding SAWs $\vpi$
of $\vG $.  
Therefore,
\begin{equation}
|(\vpi,H)|
\leq \left(2\modl  \sum_{i=1}^{2m}\vsi_i\right)^{\delta n}
\left( \sum_{i=0}^{n+2\modl  \delta n}\sigma_i \right).
\label{ub}
\end{equation}

Let $\tau := \limsup |T_n|^{1/n}$. 
We  combine \eqref{lb0}--\eqref{ub}, take $n$th roots and the limit as $n\to\infty$, 
to obtain, by the fact that $\sigma_N^{1/N} \to \mu$,
\begin{equation*}
\tau\frac{\kappa^{\kappa}}{\delta^{\delta}(\kappa-\delta)^{\kappa-\delta}}\leq \left(2\modl  \sum_{i=1}^{2m}\vsi_i\right)^{\delta}\mu^{1+2\modl  \delta}.
\end{equation*}
There exists $Z=Z(\eps, m, \De, \modl )<\oo$ such that
$$
2\modl \mu^{2\modl  } \sum_{i=1}^{2m}\vsi_i\le Z.
$$ 
Therefore,
\begin{equation*}
\tau \leq f(\eta)^\kappa \mu,
\end{equation*}
where $f(\eta)= Z^\eta \eta^\eta(1-\eta)^{1-\eta}$ and $\eta={\delta}/{\kappa}$.
Since 
\begin{equation*}
\lim_{\eta\downarrow 0}f(\eta)=1, \quad \lim_{\eta\downarrow 0}f'(\eta)=-\infty,
\end{equation*}
we have that $f(\eta)<1$ for sufficiently small $\eta=\eta(Z)>0$,
and \eqref{g11055} follows for suitable $S<1$.
The proof is complete.
\end{proof}

\begin{proof}[Proof of Theorem \ref{si} under Assumption \ref{assum}]

By Lemma \eqref{zll}, $\la_{\modl }<\mu$.
Let $\eps$, $M$ satisfy \eqref{g1103} and let $m \ge M$ satisfy
\eqref{g1104}, with $k=\modl $.
By Lemma \ref{zl}, there exist $a=a(\eps,m)>0$   and $R=R(\eps,m)\in(0,1)$ 
such that 
\begin{equation}
\limsup_{n\to\infty}\vsi_n(an,E_{\modl }^m)^{{1}/{n}}<R\mu.
\label{pin}
\end{equation}

Let $T_n$ be the subset of $\vSi_n$ comprising 
SAWs for which $E_{\modl }^m$ occurs 
at least $an$ times. Thus
\begin{equation}\label{g1106}
|T_n| \ge \vsi_n - \vsi_n(an,E_{\modl }^m).
\end{equation}
We follow the route of the previous proof. For $\vpi\in T_n$ and $\kappa$
as in \eqref{g459}, we may find $j_1<\dots<j_u$ 
with $u=\kappa n -2$ such that
\eqref{cd-1}--\eqref{cd2} hold with $k=\modl $.

Let $\delta>0$,  set $s = \de n$, and let $\vpi\in T_n$. 
We choose a subset $H$
of $\{j_1,j_2,\dots,j_u\}$ with cardinality $s$, and we
construct a SAW $\pi_*=\pi_*(\vpi,H)$ on $G$ accordingly. 
This may be done exactly as in the previous proof \emph{if $G$
has type $2$ or $3$} to obtain as in \eqref{g11055} that
there exists $S=S(\eps,m,\De,\modl )<1$ such that
\begin{equation}\label{g1107}
\limsup_{n\to\infty}|T_n|^{1/n} \le S\mu.
\end{equation}
By \eqref{pin}--\eqref{g1106},
\begin{equation*}
\vmu = \lim_{n\to\oo} \vsi_n^{1/n} \le \max\{R,S\}\mu,
\end{equation*}
and the claim of the theorem is proved in this case.

\smallskip
\noindent
\emph{Assume $G$ has type 1.}
This case is easier than the others. Suppose $\vpi_{j_t}=\ol v$,
and $\vpi$ lifts to $\pi$ with $\pi_{j_t}=v$. We obtain 
$\pi_*^{(t)}$ by replacing $v$ by the pair $v,\g v$ for 
some $\g\in \sAz$ with $\g \ne \id$, and translating by $\g$ the 
subwalk of $\pi$ starting at $v$.  
The above counting argument yields the result.
\end{proof}

\begin{remark}\label{rem2}
Theorem \ref{si} has been proved subject to Assumption \ref{assum},
and it remains to prove it when $\sAz$ has type $2$ and condition (b)(ii)
holds. Suppose the latter holds, so that $l \ge 2$. If $l=2$,
the argument in the proof of Lemma \ref{zll} is valid (this is Case 2.1 of the proof, 
which does not use condition (i)), and the theorem follows similarly.
Assume $l \ge 3$. By vertex-transitivity, we may set $v_0 = v$, and replace
$\ell_{v_0}$ by the path of (ii), so that $\modl =l$. Case 3 of the proof of Lemma \ref{zll} may
be followed with one difference, namely that \eqref{g709} holds by the assumption that
$\b$ fixes no $w_i$.
\end{remark}

\begin{proof}[Proof of Theorem \ref{thm:approx}]
We use the notation of the previous proofs. The constants $R$, $S$ of the last proof
may be calculated explicitly in 
terms of $\eps$, $m$, $\De$, $\modl $, and so it
suffices to describe how to choose $\eps$ and $m$. 

Let $a_n = \si_n^{1/n}$, noting by \eqref{1215} that $a_n \ge \mu$
and $a_n \to \mu$. By changing $(b_n)$ if necessary,
we may assume for convenience that $(b_n)$ is a non-decreasing
sequence, so that $b_n \uparrow \mu$.

By Lemma \ref{zll},  $\la_{\modl }<\mu$.
Since $b_r(1-r^{-1}) \uparrow \mu$,
we may find the earliest  $r$ such that
$$
\vsi_r(0,E_{\modl })^{1/r} < b_r(1-r^{-1}).
$$   
Let $\eps=r^{-1}$, and let $s\ge r$ be such that
$b_s(1+\eps) \ge a_s(1+\frac12\eps)$. Since $b_s \ge b_r$,
$$
\vsi_r(0,E_{\modl })^{1/r} < b_s(1-\eps).
$$
By \eqref{1012},  $\la_{\modl } < b_s(1-\eps)$, so that
\begin{equation}\label{g1112}
\vsi_m(0,E_{\modl })^{1/m} < b_s(1-\eps)
\end{equation}
for infinitely many values of $m$.

Since $a_s \ge \mu$, we may
find the earliest $m$ such that \eqref{g1112} holds and in addition
$\vsi_m^{1/m} \le b_s(1+\eps)$.
With $\eps$ and $m$ defined thus, 
\eqref{g1104} and \eqref{g1105} are valid as required.

Each of the above computations requires a finite enumeration of
a suitable family of SAWs.
\end{proof}

\begin{proof}[Outline Proof of Theorem \ref{thm:benschr}]

Let $v_0 \in V$ with orbit $\ol v_0 = \sAz v_0$.
With $l\ge 1$ as given, let $\vell$ be a (directed)  cycle of $\vG$ of length $l$.
Since $\vG$ is vertex-transitive, we may assume $\vell$ goes through
$\ol v_0$, and thus we write $\vell_{\ol v_0}$ for $\vell$, considered
as a (directed) walk from $\ol v_0$ to $\ol v_0$. Now, $\vell_{\ol v_0}$
lifts to a walk $\ell_{v_0}$ from $v_0$ on $G$. Since every vertex of $\vell_{\ol v_0}$ other than
its endpoints are distinct, and $G$ possesses no cycle of length $l$, 
$\ell_{v_0}$ is a SAW from $v_0$ on $G$.

Let $\sB$ be a subgroup of $\Aut(\vG)$ acting transitively on $\vG$.
For $w \in V$, let $\vL(\vec w)$ be the set of images of $\vell_{\ol v_0}$ under $\sB$,
and let $L(w)$ be the set of lifts of such images. 
Since $G$ possesses no cycle of length $l$, every member of $L(w)$ is
a SAW on $G$.

We now follow the above proof of Theorem \ref{si}. No assumption is made
on the relationship between the groups $\sA$ and $\sB$. If $l \ne 2$,
the argument is as presented previously except insomuch as that 
\eqref{g709} holds (since $\sAz$ acts freely). If $l=2$,  we follow
Case 2.1 as in Remark \ref{rem2}.
\end{proof}

\section{Proof of Theorem \ref{thm:addedge}}
\label{sec:cayley-proof}

The general idea of the proof of Theorem \ref{thm:addedge} 
is similar to that of Theorem \ref{si}, but
with some differences. 
The graphs $G$, $\olG$ have the same vertex-set $V$, 
but $\olG$ possesses cycles not present in $G$.
Let $v_0, w_0 \in V$ be such that there are strictly more edges
of the form $\langle v_0,w_0\rangle$
in $\olE$ than in $E$.
Since $G$ is connected, it has a shortest path from $v_0$ to $w_0$, written
\begin{equation}\label{ccl}
v_0 f_1 v_1 f_2 v_2 \cdots f_{\modl}v_{\modl}\,(=w_0)
\quad \text{where}\quad f_t = \langle v_{t-1},v_t \rangle.
\end{equation}
The path \eqref{ccl}, followed by the new
edge $f := \langle w_0,v_0\rangle$, forms 
a cycle in $\olG$ but not in $G$. Let $C_1$ denote this cycle of $\olG$, 
and note that $C_1$ has length $\modl +1$. 
As usual, $\De$ denotes the vertex-degree of $G$.

Let $C$ be a finite set of 
vertices of $G$ with $v_0\in C$, and denote by
$\g C=\{\g z: z \in C\}$ the image of $C$ under the automorphism $\g\in\olGa$. 
Let $\pi = (\pi_0,\pi_1,\dots,\pi_n)$ be an $n$-step SAW on $G$ from $v_0$.
For $k \in \NN$, we say that $E_k(C)$ occurs at the $j$th step of $\pi$ if there exists  
$\g\in\olGa$ with
$\g v_0=\pi_{j}$ such that at least $k$ vertices of 
$\g C$ are visited by $\pi$. 
Let $\sigma_n(r,E_k(C))$ (\resp, $\s_n(r,E_*(C))$) 
be the number of $n$-step SAWs on $G$ from  $v_0$ 
in which $E_k(C)$ (\resp, $E_{|C|}(C)$) occurs at no more than $r$ steps.
When $C$ is the vertex-set of $C_1$, 
we write $E^1_k = E_k(C_1)$ for $E_k(C)$, and so on.

The main step of the current proof is the following lemma. 

\begin{lemma}\label{l2}

With $\mu=\mu(G)$, we have that
\begin{equation}\label{g8120}
\liminf_{n\to\infty}\sigma_n(0,E_2^1)^{1/n}<\mu.
\end{equation}
\end{lemma}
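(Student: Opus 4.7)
The proof adapts the Kesten-style substitution argument from Lemma \ref{zll}, now applied directly to walks on $G$ itself. Define $\la_k := \liminf_{n\to\oo} \sigma_n(0, E_k^1)^{1/n}$ for $k \ge 1$. Since $\pi_0 = v_0 \in C_1$ and $\id \cdot v_0 = v_0$, the event $E_1(C_1)$ occurs at step $0$ of every SAW from $v_0$, so $\sigma_n(0, E_1^1) = 0$ and $\la_1 = 0$. Because $(\la_k)_k$ is non-decreasing, the lemma amounts to showing $\la_2 < \mu$. Suppose for contradiction that $\la_2 = \mu$. An adaptation of Lemma \ref{zl} (with $E_1^1$ in the role of $E_k$ and $E_2^1$ in the role of $E_{k+1}$) produces $m \in \NN$, $a > 0$, and $R \in (0,1)$ for which the family $T_n$ of $n$-SAWs on $G$ that avoid $E_2^1$ and possess at least $an$ suitably $m$-separated positions $j$ with $\pi_j \in \osA v_0$ satisfies $|T_n|^{1/n} \to \mu$.

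For each $\pi \in T_n$, the selection procedure of \eqref{cd-1}--\eqref{cd2} yields $u \ge \kappa n - O(1)$ positions $j_1 < \cdots < j_u$, together with automorphisms $\gamma_t \in \osA$ such that $\gamma_t v_0 = \pi_{j_t}$ and the translated cycles $\gamma_t C_1$ are pairwise vertex-disjoint. Since $\pi$ avoids $E_2^1$ at step $j_t$, we have $\pi \cap \gamma_t C_1 = \{\pi_{j_t}\}$; hence the detour path $P_t := (\pi_{j_t}, \gamma_t v_1, \ldots, \gamma_t w_0)$, which lies entirely in $G$ because it is the $\gamma_t$-image of the shortest $G$-path underlying $C_1$, meets $\pi$ only at its starting vertex. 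For each subset $H \subseteq \{j_1, \ldots, j_u\}$ of size $s = \delta n$ (with $\delta > 0$ small), construct $\pi_*(\pi, H)$ iteratively: at each $j_t \in H$, divert from $\pi_{j_t}$ along $P_t$ to $\gamma_t w_0$, and then continue with the image of the tail $(\pi_{j_t+1}, \pi_{j_t+2}, \ldots)$ under an automorphism $\alpha_t \in \osA$ mapping $\pi_{j_t}$ to $\gamma_t w_0$, mirroring Cases 2.1 and 3.1 of the proof of Lemma \ref{zll}.

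The pairwise disjointness of the $\gamma_t C_1$ together with the $m$-separation of the $j_t$ guarantees that $\pi_*(\pi, H)$ is a SAW on $G$ of length at most $n + \modl \delta n$, and that the pair $(\pi, H)$ is recoverable from $\pi_*(\pi, H)$. Comparing the resulting lower bound $|T_n|\binom{u}{s}$ against the upper bound in terms of $\sigma_{n + \modl \delta n}$, in analogy with \eqref{lb0}--\eqref{ub} and using the ball-size estimate $\De^{2\modl + 1}$ from \eqref{200}, then optimizing in $\delta$, yields $|T_n|^{1/n} \le S\mu$ for some $S < 1$, contradicting $|T_n|^{1/n} \to \mu$. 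The principal technical obstacle is ensuring the existence of the shift $\alpha_t \in \osA$ with $\alpha_t \pi_{j_t} = \gamma_t w_0$, which is equivalent to $w_0 \in \osA v_0$ and need not hold when $v_0$ and $w_0$ lie in distinct $\osA$-orbits. When this fails one invokes the finite coset property (Definition \ref{def:fcp}) and the representatives $\nu_0, \ldots, \nu_s$, either restricting substitutions to positions whose orbit structure admits a valid shift, or chaining two detours so that the net endpoint returns to the orbit of $\pi_{j_t}$.
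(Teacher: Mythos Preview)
Your argument has a genuine gap at the substitution step, and the obstacle you flag at the end (existence of $\alpha_t$) is not the main one.

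In Cases 2.1 and 3.1 of Lemma \ref{zll}, the reason the spliced walk $\pi_*^{(t)}$ is self-avoiding on $G$ is the quotient structure: the original walk $\vpi$ is a SAW on $\vG$, so the vertices $\pi_0,\dots,\pi_n$ lie in pairwise distinct $\sAz$-orbits (apart from the inserted cycle), and any $\sAz$-shift of the tail therefore cannot hit the initial segment. Here there is no quotient. Your $\pi$ is simply a SAW on $G$, and after you shift the tail by $\alpha_t\in\osA$, nothing prevents $\alpha_t\pi_{j_t+r}$ from coinciding with some $\pi_s$ for $s<j_t$, or with a vertex of the detour $P_t$. The conditions you impose---$\pi$ avoids $E_2^1$, the $\gamma_t C_1$ are pairwise disjoint, the $j_t$ are $m$-separated---are all local near the $\gamma_t C_1$ and say nothing about such global collisions. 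So the claim that ``$\pi_*(\pi,H)$ is a SAW on $G$'' is unjustified, and with it the upper bound of the form \eqref{ub} collapses.

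This is precisely why the paper does \emph{not} attempt a $k=1$ substitution here. Its proof of Lemma \ref{l2} takes a completely different route. Assuming \eqref{g8120} fails, it first shows (Lemma \ref{lem:Ch}) via an iterative edge-class deletion and a \emph{concatenation} argument that there is a finite chain of cycles $C_1,C_2,\dots,C_h$ in $G$, each sharing an edge with its predecessor, such that on the graph $G_{h-1}$ (with the first $h-1$ edge-classes removed) one has $\liminf_n \sigma_{n,h-1}(0,E_2(C_h))^{1/n}<\mu$. The key inequality there is \eqref{rcn}: if deleting an edge-class disconnected the endpoints of a deleted edge, one could freely concatenate SAWs on the smaller graph through deleted edges and obtain $\mu(G)>\mu(G_1)$, contradicting \eqref{g815}. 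Only after this, working at level $k\ge 2$, does the paper perform a Kesten-type substitution, and now the replacement is between \emph{two} visited vertices $\pi_{\alpha_t},\pi_{\omega_t}\in\gamma_t C_h$, swapping one arc of the cycle for the other with the \emph{same endpoints}. No tail shift is needed, so self-avoidance is immediate from \eqref{g820}. Your approach would need an entirely new mechanism to guarantee self-avoidance after the shift; the finite-coset workaround you sketch does not supply one.
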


As in \eqref{g1011}, the $\liminf$ is in fact a limit (here and later).
An automorphism $\g \in \olGa$ acts on the edge $e = \langle x,y\rangle$ by
$\g e = \langle \g x, \g y\rangle$.
Let $\asymp$ be the equivalence relation on the edge-set of $\olG$ given by
$e \asymp e'$ if there exists $\g\in\olGa$ 
such that $\g e=e'$.   
We write $\ole$ for the equivalence class containing
edge $e$, and $\{\ole: e\in E\}$ for the set of such classes. 
Since $\olGa$ acts transitively on $\olG$, for every edge $e$ and vertex $v$,
there exists a member of $\ole$ that is incident to $v$.

The proof of Lemma \ref{l2} will make use of the intermediate Lemmas \ref{lem:Ch}--\ref{l1}.

\begin{lemma}\label{lem:Ch}
Assume that
\begin{equation}\label{g813}
\lim_{n\to\oo} \sigma_n(0,E^1_2)^{{1}/{n}} = \mu.
\end{equation}
There exist $h \ge 2$ and a sequence 
$(\ole_r: r =1,2,\dots,h-1)$
of distinct equivalence classes of edges of $G$, with $\ole_1 = \ol f_1$, such that the following holds.

For $r \ge 1$, let 
$$
\olF_r =\ole_{1} \cup \ole_{2} \cup \dots \cup \ole_{r},
$$
and let $G_r$ be the graph obtained from $G$ by the deletion of 
$\olF_r$.

There exists a sequence $C_2,C_3,\dots,C_h$ of cycles in $G$ such that:
\begin{align}\label{g818}
\text{for $2\le r \le h$,  there exists $e_{r-1}\in\ole_{r-1}$}&\text{ that is incident to $v_0$}\\
\text{and belongs to both $C_{r-1}$ }&\text{and $C_{r}$},
\nonumber \\ 
\lim_{n\to\oo} \si_{n,r-1}(0,E_2(C_{r}))^{1/n} &= \mu, \qquad  2 \le r < h,\label{g818c}\\
\liminf_{n\to\oo} \si_{n,h-1}(0,E_2(C_h))^{1/n} &< \mu,
\label{g818d}
\end{align}
where $\sigma_{n,r-1}(0,E_2(C_r))$ is the number of 
$n$-step SAWs on $G_{r-1}$ from $v_0$
for which $E_2(C_r)$ never occurs.
\end{lemma}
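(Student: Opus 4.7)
The plan is a finite iterative construction of the edge-equivalence classes $\ole_r$ and cycles $C_r$. Set $\ole_1 = \ol{f_1}$ and take $C_1$ to be the cycle \eqref{ccl} of $\olG$; the standing hypothesis \eqref{g813} is precisely the instance of \eqref{g818c} needed to begin. We extend the sequences one index at a time, terminating as soon as \eqref{g818d} is attainable. The global shape of the argument is the ``available directions at $v_0$'' idea: each new $\ole_r$ accounts for an additional orbit of edges at $v_0$ under $\Stab_{v_0}$, and there are only finitely many such orbits.

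For the inductive step at stage $r \ge 2$, suppose $\ole_1, \dots, \ole_{r-1}$ (distinct) and $C_1, \dots, C_{r-1}$ have been chosen so that \eqref{g818} and \eqref{g818c} hold up to index $r-1$. Pick an edge $e_{r-1} \in \ole_{r-1}$ incident to $v_0$, which exists by the compatibility hypothesis. By vertex-transitivity and connectedness (and the assumption that \eqref{g813} forces the presence of cycles in $G$), there is a cycle of $G$ through $v_0$ traversing $e_{r-1}$: follow $e_{r-1}$ to its far endpoint and return via a shortest path in $G$ avoiding $e_{r-1}$. Let $C_r$ be any such cycle. If for some admissible $C_r$ one has $\liminf_{n\to\oo}\si_{n,r-1}(0, E_2(C_r))^{1/n} < \mu$, fix that $C_r$, set $h = r$, and stop. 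Otherwise choose $C_r$ so that the class $\ole_r$ of its second edge at $v_0$ is distinct from $\ole_1, \dots, \ole_{r-1}$, and iterate. Since the edges at $v_0$ partition into at most $\De$ orbits under $\Stab_{v_0}$ and each step contributes a fresh orbit, the iteration halts within at most $\De$ steps.

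The main obstacle is the degenerate case at some stage $r$ where no admissible cycle $C_r$ yields a fresh equivalence class—i.e., every cycle of $G$ through $v_0$ containing $e_{r-1}$ has both its edges at $v_0$ already in $\olF_{r-1}$. In this situation one must verify \eqref{g818d} directly. The intuition is that with every edge at each translate of $v_0$ absorbed by $\olF_{r-1}$, a SAW on $G_{r-1}$ that avoids every translate of $C_r$ faces a severely impoverished local structure whenever it approaches any such translate, and the resulting loss of freedom should yield a strictly sub-$\mu$ growth rate. Converting this qualitative scarcity into a quantitative exponential deficit is the most technical step, and is likely to proceed along the subadditive and pattern-avoidance lines used earlier for the quantities $\la_k$ in \eqref{g1011}.
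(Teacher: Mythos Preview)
Your proposal has a genuine gap, and it is precisely the ``degenerate case'' you flag at the end. You say that when no cycle through $e_{r-1}$ at $v_0$ yields a fresh edge-class, one must verify \eqref{g818d} directly, and you offer only a qualitative picture for why this should hold. That picture is not a proof, and indeed the paper does \emph{not} argue this way at all.

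The paper's key idea is to show that, as long as the process has not terminated, the degenerate case \emph{cannot occur}: a cycle $C_r$ with a fresh class $\ole_r$ always exists. The mechanism is a concatenation argument. From the non-termination hypothesis $\lim_n \si_{n,r-1}(0,E_2(C_r))^{1/n}=\mu$ one first deduces $\mu(G_r)=\mu$, since any SAW counted in $\si_{n,r-1}(0,E_2(C_r))$ traverses no edge of $\olF_r$. Now let $e_r=\langle v_0,v_0^{(r)}\rangle$ and let $G_r'$ be $G_{r-1}$ with the edges of $\ole_r$ at $v_0$ removed. If $v_0$ and $v_0^{(r)}$ were disconnected in $G_r'$, then by transitivity every edge of $\ole_r$ would separate its endpoints in the analogous local sense, and one could concatenate arbitrary SAWs on $G_r$ across edges of $\ole_r$ to produce SAWs on $G$. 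Counting these concatenations yields
\[
\si_{2n} \ge \sum_{j=0}^{n}\binom{n}{j}\mu(G_r)^{2n-j} = \mu(G_r)^{2n}\bigl(1+\mu(G_r)^{-1}\bigr)^n,
\]
whence $\mu \ge \mu(G_r)\sqrt{1+\mu(G_r)^{-1}} > \mu(G_r)$, contradicting $\mu(G_r)=\mu$. So $v_0$ and $v_0^{(r)}$ are connected in $G_r'$; a shortest such path $W_{r+1}$ lies in $G_{r-1}$ and avoids $\ole_r$ at $v_0$, so its first edge $e_{r+1}$ lies in none of $\ole_1,\dots,\ole_r$ and furnishes the fresh class. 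The cycle $C_{r+1}$ is $W_{r+1}$ closed up by $e_r$, which automatically gives the shared-edge condition \eqref{g818}. Termination then comes for free: each step strictly lowers the degree of $v_0$ in $G_r$, and once it reaches zero the hypothesis $\mu(G_r)=\mu$ is impossible.

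Two smaller points. First, your justification for the existence of any cycle through $e_{r-1}$ (``follow $e_{r-1}$ and return via a shortest path avoiding $e_{r-1}$'') already presumes that $e_{r-1}$ is not a bridge, which is exactly what the concatenation argument establishes (in the stronger form needed, namely connectivity in $G_r'$ rather than merely in $G\setminus\{e_{r-1}\}$). Second, the edge-classes $\ole_r$ are $\osA$-orbits of edges, not $\Stab_{v_0}$-orbits; your finiteness bound still works because each $\ole_r$ meets $v_0$, but the identification is not quite right as stated.
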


\begin{proof}[Proof of Lemma \ref{lem:Ch}]

We construct the $\ole_{r}$ and $C_r$ by iteration. 
Let $G_r'$ be obtained from $G_{r-1}$ (where $G_0:=G$)
by the deletion of all edges in $\ole_{r}$ incident to $v_0$.
Assume \eqref{g813} holds, and let $\ole_1 = \ol f_1$.

\smallskip
\noindent
\emph{The case $r=2$.}
By \eqref{g813}, for `most' $n$-step SAWs $\pi$ on $G$ from $v_0$,
and for all $j$ and all $\g C_1$ with $\g\in\olGa$ and 
$\g v_0=\pi_j$, $\pi_j$ is the unique vertex of $\g C_1$ visited by $\pi$. 
Since no edge in $\olF_1$ is traversed by
any path contributing to $\s_n(0,E^1_2)$, by \eqref{g813},   
\begin{equation}\label{g815}
\mu(G_1)=\mu.
\end{equation}

Let $v_0^{(1)}$ be the neighbour of $v_0$ such
that $f_1= \langle v_0, v_0^{(1)}\rangle$.
We claim  that, subject to \eqref{g813},
\begin{equation}\label{g814}
\text{$v_0$ and $v_0^{(1)}$ are connected in $G_1'$},
\end{equation}
and shall prove this by contradiction.
Assume \eqref{g813} holds, and that
$v_0$ and $v_0^{(1)}$ are not connected in $G_1'$.

Let $\pi^{(1)}$, $\pi^{(2)}$ be two SAWs from $v_0$ on $G_1$, and consider a walk
$\nu$ on $G$ given as follows: $\nu$ follows $\pi^{(1)}$ from $v_0$ to its other endvertex $z$, then traverses
an edge of the form $f =\langle z,w\rangle \in \ole_{1}$, 
and then follows $\g \pi^2$ for some $\g\in\olGa$ satisfying $\g v_0=w$. 
By the above assumption and vertex-transitivity, 
the removal of all edges of $\ole_1$ incident to $z$ disconnects $z$ and $w$, and thus
any such $\nu$ is a SAW on $G$. 

This process of concatenation may be iterated as follows.
Let $\pi^{(1)},\allowbreak
\pi^{(2)}, \dots,\pi^{(N)}$ be SAWs on $G_1$ from $v_0$. 
We aim to construct a SAW $\nu$ on $G$ as follows. Suppose we have concatenated $\pi^{(1)},\dots, \pi^{(r)}$ 
to obtain a SAW $\nu^{(r)}$ on $G$ from $v_0$ to some vertex $z$. 
We now traverse an edge of the form $f =\langle z,w\rangle\in \ole_{1}$, 
followed by the SAW $\g\pi^{(2)}$ 
for some $\g\in\olGa$ with $\g v_0=w$.
Unless (i)  there is a unique edge of $\ole_{1}$ incident to $z$  and (ii) $\pi^{(r)}$ is
the SAW with zero length, then there exists a choice of $w$ 
such that the resulting walk $\nu^{(r+1)}$
is a SAW.  
Therefore, if every $\pi^{(r)}$ has length $1$ or more,
the sequence $(\pi^{(r)})$ may be concatenated to obtain a SAW $\nu=\nu^{(N)}$ on $G$. 

Let $\si_{n,1}$ be the number of $n$-step SAWs from $v_0$ on $G_1$, and let $\mu_1 = \mu(G_1)$.
By  \eqref{1215},
\begin{equation}
\sigma_{2n,1}\geq \mu_1^{2n},\qquad n \ge 1. \label{cn1}
\end{equation}
Let $\Sigma^{\ro}_{2n}$ be the set of $2n$-step SAWs from $v_0$ on $G$ that traverse edges in $F_1$ 
(in either direction) at the
odd-numbered steps only, and write $\si^\ro_{2n} = |\Si^\ro_{2n}|$.
Our purpose in considering only the odd steps is to avoid component walks
of zero length.

Let $0 \le j \le n$, and consider the set of all $\pi\in\Si^\ro_{2n}$ 
that traverse exactly $j$ edges of $F_1$.
There are $\binom nj$ ways of choosing the indices of these edges within $\pi$.
By the construction described above, and \eqref{cn1}, there are at least
$\mu_1^{2n-j}$ choices for the sequence of SAWs on $G_1$ whose
concatenation makes $\pi$. Therefore,
\begin{equation}\label{g816}
\si_{2n}^\ro \ge \sum_{j=0}^n \binom nj \mu_1^{2n-j} = \mu_1^{2n}(1+\mu_1^{-1})^n.
\end{equation}
This implies
\begin{equation}\label{rcn}
\sigma_{2n} \geq \sigma^\ro_{2n}  \ge \mu_1^{2n}(1+\mu_1^{-1})^n,
\end{equation}
whence $\mu \ge \mu_1\sqrt{1+\mu_1^{-1}} > \mu_1$,
in contradiction of \eqref{g815}. Statement \eqref{g814} is proved.

By \eqref{g814}, $G$ has a shortest (directed) walk 
$W_2$ from $v_0$ to $v_0^{(1)}$
using no edge of $\ole_{1}$ incident to $v_0$. The walk $W_2$, followed by the 
(directed) edge $\langle  v_0^{(1)},v_0\rangle$, 
forms the required (directed) cycle $C_2$ of $G$, having $e_1$ 
in common with $C_1$. Let
$e_{2} = \langle v_0, v_0^{(2)}\rangle$ be the first edge of
$W_2$, 
and write $E_k^2= E_k(C_2)$ and so on. 

If
\begin{equation}\label{g817-}
\liminf_{n\to\oo} \si_{n,1}(0,E_2^2)^{1/n} < \mu,
\end{equation}
we have proved the claim with $h=2$.
Suppose conversely that
\begin{equation}\label{g817}
\lim_{n\to\oo} \si_{n,1}(0,E_2^2)^{1/n}  = \mu,
\end{equation}
whence $\mu(G_2)=\mu$ as in \eqref{g815}.

\smallskip
\noindent
\emph{The general case.}
Suppose $u \ge 2$, and cycles $C_2,C_3,\dots,C_u$ from $v_0$ 
have been found, with corresponding classes $\ole_2, \ole_3,\dots,\ole_u$
and edges $e_i = \langle v_0, v_0^{(i)}\rangle \in \ole_i$,  
such that \eqref{g818} holds with $h$ replaced by $u$, and 
\begin{equation}\label{g819}
\lim_{n\to\oo} \si_{n,{i-1}}(0,E_2^i)^{1/n} = \mu, \qquad  1 \le i \le u,
\end{equation}
where $E_2^i = E_2(C_i)$. By \eqref{g819} with $r=u$, we have as above that $\mu(G_u)=\mu$. 

Let $\sigma_{2n,{u-1}}^{\ro}$ be the number of $2n$-step SAWs 
on $G_{u-1}$ from $v_0$ that traverse edges in $F_u$
only at odd steps. If $v_0$ and $v_0^{(u)}$ are disconnected in  $G_u'$, we have
\begin{equation}\label{gi}
\sigma_{2n}\geq \sigma_{2n,{u-1}}^{\ro}
\geq \sum_{j=0}^{n}\binom{n}{j}\mu_u^{2n-j} =\mu_u^{2n}(1+\mu_u^{-1})^{n},
\end{equation}
where $\mu_u=\mu(G_u)$. 
This contradicts $\mu_u=\mu$, whence $v_0$ and $v_0^{(u)}$ are connected in $G_u'$.

Therefore,  $v_0$ and $v_0^{(u)}$ are connected by a shortest (directed) walk 
$W_{u+1}$ of $G_{u-1}$  using no edge of $\ole_u$ incident to $v_0$, and we write
$e_{u+1} = \langle v_0,v_0^{(u+1)}\rangle$ for the first edge of $W_{u+1}$,
and $C_{u+1}$ for the cycle in $G_{u-1}$ formed by $W_{u+1}$ followed by $\langle  v_0^{(u)},v_0\rangle$. 
If
\begin{equation}\label{g1111}
\liminf_{n\to\oo} \si_{n,u}(0,E_2^{u+1})^{1/n} < \mu, 
\end{equation}
we stop and set $h=u+1$, and otherwise we continue as above.

\smallskip

Either this iterative process terminates at some earliest $u$ satisfying \eqref{g1111}, or not.
In the step from $G_u$ to $G_{u+1}$, the degree of $v_0$ is
reduced  strictly.
If the process does not terminate in the manner of \eqref{g1111}, 
all edges incident to $v_0$ in the last 
non-trivial graph encountered, $G_t$ say, lie in
the same  equivalence class. Furthermore,
the equation of \eqref{g819} holds with $i$ replaced by $t+1$, and hence $\mu_{t+1}=\mu$.
This is impossible since $v_0$ is isolated in $G_{t+1}$.
The proof is complete.
\end{proof}

If \eqref{g813} fails, we set $h=1$ and $G_0=G$. 
If \eqref{g813} holds, we adopt the notation of Lemma \ref{l1}, and write
$E_k^i = E_k(C_i)$ and so on. 
For an $n$-step  SAW $\pi=(\pi_0,\pi_1,\dots,\pi_n)$ on $G$, 
we say that $E_k^i(m)$ occurs at the $j$th step of $\pi$ if 
$E_k^i$ occurs at the 
$m$th step of the $2m$-step SAW $(\pi_{j-m},\dots,\pi_{j+m})$
(with a modified definition if either $j - m<0$ or $j+m>n$, as in the second paragraph after \eqref{200}). 

\begin{lemma}\label{l1}

Let $1 \le i \le h$ and $1\le k \le |C_i|$.
If
$$
\liminf_{n\to\oo} \si_{n,{i-1}}(0,E^i_k) < \mu,
$$
then there exist $a>0$ and $m\in\NN$ such that
\[
\limsup_{n\to\infty} \sigma_{n,{i-1}}(an,E_k^i(m))^{1/n}<\mu.
\]
\end{lemma}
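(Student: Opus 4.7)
The plan is to reproduce the block-decomposition argument of Lemma \ref{zl} almost verbatim, replacing $\vG$ by $G_{i-1}$ and the events $E_k$, $E_k^m$ by $E_k^i$, $E_k^i(m)$. The hypothesis of the lemma plays the role of \eqref{g658}. By subadditivity (exactly as at \eqref{g1011}), the sequence $\si_{n,i-1}(0,E_k^i)^{1/n}$ converges, so $\liminf < \mu$ gives $\eps>0$ and $M\in\NN$ with $\si_{m,i-1}(0,E_k^i) < [\mu(1-\eps)]^m$ for all $m\ge M$. Because $\si_m^{1/m}\to\mu$, I may further select $m\ge M$ satisfying $\si_m \le [\mu(1+\eps)]^m$. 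The identity
\[
\si_{m,i-1}(0,E_k^i) = \si_{m,i-1}(0,E_k^i(m))
\]
holds for the same reason as its analogue $\vsi_m(0,E_k)=\vsi_m(0,E_k^m)$: within an $m$-step SAW, any occurrence of $E_k^i$ is automatically an occurrence of $E_k^i(m)$.

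Next, fix an $n$-step SAW $\pi$ on $G_{i-1}$ from $v_0$ and set $N=\lfloor n/m\rfloor$. Decompose $\pi$ into $N$ consecutive $m$-step subwalks $\pi^{(t)} := (\pi_{(t-1)m},\dots,\pi_{tm})$ plus a residual tail of length $n-Nm<m$. If $E_k^i(m)$ occurs at no more than $an$ steps of $\pi$, then $E_k^i$ itself occurs internally in at most $an$ of the subwalks $\pi^{(t)}$, because any internal occurrence at step $j'$ of $\pi^{(t)}$ immediately witnesses $E_k^i(m)$ at step $(t-1)m+j'$ of $\pi$. Classifying pairs (which subwalks are contaminated, what each subwalk looks like) produces the direct analogue of \eqref{ccp}:
\[
\si_{n,i-1}(an,E_k^i(m)) \le \sum_{j=0}^{an}\binom{N}{j}(\si_{m,i-1})^{j}\,\si_{m,i-1}(0,E_k^i)^{N-j}\,\si_{n-Nm,i-1}.
\]

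Inserting the bounds $\si_{m,i-1}\le\si_m\le[\mu(1+\eps)]^m$ and $\si_{m,i-1}(0,E_k^i)\le[\mu(1-\eps)]^m$, taking $N$-th roots, and passing to the limit with $\zeta:=am$ reduces matters to exactly the Stirling estimate of \eqref{ccp2}. The leading factor tends to $(1-\eps)^m\mu^m<\mu^m$ as $\zeta\downarrow 0$, so choosing $a>0$ sufficiently small (depending on $\eps$ and $m$) yields $\limsup_n\si_{n,i-1}(an,E_k^i(m))^{1/n}<\mu$, as required. The main obstacle I anticipate is a bookkeeping one: I need $G_{i-1}$ and the events $E_k^i$ to be jointly invariant under an automorphism group acting with finitely many orbits on $V$, so that both the subadditive concatenation argument and the classification of contaminated subwalks go through. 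This holds because $G_{i-1}$ is obtained by deleting whole $\olGa$-orbits of edges from $G$ and $E_k^i$ is defined via $\olGa$-translates of the cycle $C_i$; the only mild nuisance is that the action is merely quasi-transitive, which costs at worst a polynomial factor, harmless for the exponential estimates above.
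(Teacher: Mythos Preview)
Your proposal is correct and follows exactly the route the paper takes: the paper's entire proof of this lemma is the sentence ``This is proved in the same manner as was Lemma~\ref{zl},'' and you have faithfully reproduced that block-decomposition argument with $G_{i-1}$ and $E_k^i$, $E_k^i(m)$ in place of $\vG$ and $E_k$, $E_k^m$. Your closing remark about quasi-transitivity is apt and in fact more careful than the paper; the only quibble is that the correction is not literally a polynomial factor but rather replacing $\si_{m,i-1}(0,E_k^i)$ by $\max_v \si_{m,i-1}(0,E_k^i;v)$, which is itself submultiplicative and has the same exponential rate, so the estimate goes through unchanged.
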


\begin{proof}
This is proved in the same manner as  was Lemma \ref{zl}.
\end{proof}

\begin{proof}[Proof of Lemma \ref{l2}]

Assume the converse of \eqref{g8120}, namely \eqref{g813},
and adopt the notation of Lemma \ref{lem:Ch}. Recall
the notation $E_*$ from the beginning of Section \ref{sec:cayley-proof}.
Exactly
one of the following holds:
\begin{align}
\liminf_{n\to\infty} \sigma_{n,{h-1}}(0,E^h_*)^{1/n}<\mu,\label{ca}\\
\lim_{n\to\infty} \sigma_{n,{h-1}}(0,E^h_*) ^{1/n}=\mu,\label{cb}
\end{align}
and we shall derive a contradiction in each case.

Assume first that \eqref{ca} holds. No SAW counted in $\si_{n,h-2}(0,E_2^{h-1})$ traverses 
any edge of $\ole_{h-1}$, whence
$\sigma_{n,{h-2}}(0,E^{h-1}_2)=\si_{n,{h-1}}(0,E^{h-1}_2)$.
Since $C_h$ and $C_{h-1}$ 
have at least one common edge, we deduce that 
$$
\sigma_{n,{h-2}}(0,E^{h-1}_2)\le \si_{n,h-1}(0,E^h_*),
$$
and hence
\begin{equation*}
\liminf_{n\to\infty} \sigma_{n,{h-2}}(0,E_2^{h-1})^{{1}/n}<\mu,
\end{equation*}
in contradiction of the minimality of $h$. 

Assume now that \eqref{cb} holds. By \eqref{g818d} 
and the monotonicity in $k$ of $\sigma_{n,{h-1}}(0,E_k^h)$, 
there exists $k\in\NN$ satisfying $2\leq k<\lell := |C_h|$ such that
\begin{align}
\liminf_{n\to\infty} \sigma_{n,{h-1}}(0,E_k^h)^{1/n}&<\mu,\label{ie-}\\
\lim_{n\to\infty} \sigma_{n,{h-1}}(0,E_{k+1}^h)^{1/n} &=\mu.\label{ee}
\end{align}
By \eqref{ie-} and Lemma \ref{l1}, there exist $a>0$ and $m\in\NN$ such that
\begin{equation}
\limsup_{n\to\infty} \s_{n,{h-1}}(an, E_k^h(m))^{1/n}<\mu.\label{ie}
\end{equation}
We shall derive a contradiction from \eqref{ee}--\eqref{ie}
in a similar manner to the proof of \eqref{g11055}.

Let $T_n$ be the set of  $n$-step SAWs on $G_{h-1}$ from $v_0$
such that $E_{k+1}^h$ never occurs, and 
$E_k^h(m)$ occurs at least $an$ times. By \eqref{ee}--\eqref{ie},  
\begin{equation}\label{g1003}
\lim_{n\to\infty} |T_n|^{1/n}=\mu.
\end{equation}

Let $\pi=(\pi_0,\pi_1,\dots,\pi_n)  \in T_n$ and  
$u=\lfloor\kappa n\rfloor -2$ where
$$
\kappa =\frac{a}{(2m+2)\ell}.
$$ 
As after \eqref{g459}, we may find $j_1< j_2< \dots< j_u$ and
$\g_1,\dots,\g_u\in \olGa$ such that
\begin{gather}
\g_t(v_0)=\pi_{j_t},\qquad 1 \le t \le  u,\\
\text{each $2m$-step SAW $(\pi_{{j_t}-m},\dots,\pi_{{j_t}+m})$ visits}\label{g820-3}\\
\text{at least $k$ vertices of $\g_t C_h$},\nonumber\\ 
0<j_1-m,\q  j_u+m<n,\label{g820-2}\\
j_t+m<j_{t+1}-m, \qquad 1 \le t < u,\label{g820-1}\\
\g_1C_h,\g_2 C_h,\dots,\g_u C_h
\text{ are pairwise vertex-disjoint}.\label{g820}
\end{gather}
For $t=1,2,\dots,u$, let
\begin{equation}\label{g820+1}
\alpha_{t}=\min\{i: \pi_i \in \g_t C_h\},\q
\omega_{t}=\max\{i: \pi_i \in \g_t C_h\}.
\end{equation}

Since $E_k^h(m)$ occurs at the $j_t$th step but not $E_{k+1}^h$, there 
are exactly $k$ points of $\g_t C_h$ that are visited by
$\pi$, and these points lie on $\pi$ between positions $j_t-m$ and $j_t+m$. 
Therefore, 
\begin{equation}\label{g820+2}
j_t-m\leq \alpha_t<\omega_t\leq j_t+m, \qquad 1 \le t \le u.
\end{equation}
We propose to replace the subwalk $(\pi_{\a_t},\dots,\pi_{\om_t})$ 
by the part of the cycle $\g_t C_h$ with the same endpoints and using at least one edge in 
$\g_t C_{h-1}$; this may be done since $\g_t C_{h}$ and $\g_t C_{h-1}$ 
have at one edge in common,
namely $\g_t e_{h-1}$. 
By \eqref{g820}, such a replacement may be performed simultaneously
for all $t$. The resulting walk $\psi$ is 
a SAW on $G$  with length $n'$ satisfying
\begin{equation*}
n'<n+u\ell.
\end{equation*}
Furthermore, since $\pi\in T_n$,  the only edges of $\ole_{h-1}$
in $\psi$ are those introduced during a
substitution. 

Let $\de>0$ and $s=\de n$, where $\de$ will be chosen later
(and we omit integer-part symbols as before).
Consider the set of pairs $(\pi, H)$ where $\pi\in T_n$, 
and $H=(h_1,h_2,\dots,h_s)$ is an ordered 
subset of $\{j_1,j_2,\dots,j_u\}$.  We may
make the above replacement around each $\pi_{h_i}$ to obtain a SAW
$\psi=\psi(\pi,H)$ on $G$

As in \eqref{lb0},
\begin{equation}
|(\pi,H)| \geq |T_n|\binom{\kappa n-2}{\delta n}.  \label{lb}
\end{equation}
For an upper bound, consider a given pair $(\pi, H)$. 
Edges in $\ole_{h-1}$ are traversed
between $|H|=\de n$ and $\ell\delta n$ times on $\psi$. 
Therefore, given $\psi$, there are at 
most $2m\binom{\ell\delta n}{\delta n}$ ($\le m 2^{\ell\de n+1}$)
possibilities for 
the location of the earliest point of $\psi$ in $\g_t C_h$. 
Given $\psi$ and the locations of  these earliest points,
there are at most $\De^{\ell}$ different choices for 
each $\g_t C_h$. Once these are determined, 
each such $\g_t C_h$ determines a subwalk of 
$\psi$ that replaces some subwalk of $\pi$. Since each of the replaced subwalks of 
$\pi$ has length not exceeding $2m$, 
there are at most $Y^{\de N}$ possibilities for $\pi$, where $Y= \sum_{i=0}^{2m}\si_i$. 
Therefore, 
\begin{equation}\label{up}
|(\pi,H)| \leq m2^{\ell\delta n+1}\De^{\ell}Y^{\delta n}\sum_{i=0}^{n+\ell\delta n}\si_i.
\end{equation}

We  combine \eqref{lb} and \eqref{up}, take the $n$th root and let $n\to\infty$, obtaining
by \eqref{g1003} that
\begin{equation*}
\mu\frac{\kappa^{\kappa}}{\delta^{\delta}(\kappa-\delta)^{\kappa-\delta}}
\leq 2^{\ell\delta}Y^{\delta}\mu^{1+\ell\delta}.
\end{equation*}
Setting $Z=(2\mu)^{\ell}Y$ and $\eta={\delta}/{\kappa}$, we deduce that
\begin{equation*}
1\leq [Z^\eta\eta^\eta(1-\eta)^{1-\eta}]^{\kappa}.
\end{equation*}
As at the end of the proof of Lemma \ref{zll},
this is a contradiction for small $\eta>0$.

In conclusion, if \eqref{g813} and \eqref{cb} hold, we have a contradiction,
and the lemma is proved.
\end{proof}

\begin{proof}[Proof of Theorem \ref{thm:addedge}]

Write $\olmu := \mu(\olG)$, and let $F$ be the set of edges of $\olG$ not in $G$. 
Let $f$ and $C_1$ be given as around \eqref{ccl}, and let $\ell=\modl +1$ be the number of vertices
in $C_1$ (or, equivalently, the number of edges in $C_1$ viewed
as a cycle of $\olG$).
It suffices to make the following assumption.

\begin{assumption}\label{cond}
We have that $F=\osA f$.
\end{assumption}

For $v,w\in V$ and a graph $H$ with vertex-set $V$,  let
$N(v,w;H)$ denote the number of edges
between $v$ and $w$ in  $H$.
For $\g \in \sA$, we construct the graph denoted $\g \olG$  as follows. 
First, $\g \olG$ has vertex-set $V$.  
For $v,w \in V$, we place $N(v,w;\olG)$ edges between
$\g v$ and $\g w$ in $\g\olG$. Thus, $\g \olG$ is
obtained from $\g G$ by adding the edges of $\g F$.
Two elementary properties of $\g\olG$ are as follows.

\begin{lemma}\label{lem:abar}
Let $\g \in \sA$.
\begin{letlist}
\item $\g \olG$ is isomorphic to $\olG$.
\item $\g\osA\g^{-1} \subseteq \Aut(\g \olG)$.
\end{letlist}
\end{lemma}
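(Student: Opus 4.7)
The plan is to handle both parts by introducing the obvious relabelling map $\phi : \olG \to \g\olG$ given by $\phi(v) = \g v$, and then to deduce (b) from (a) by a one-line conjugation calculation.

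For part (a), I would first note that $\phi$ is a bijection on the shared vertex-set $V$, since $\g \in \sA \subseteq \Aut(G)$ acts by permutation on $V$. The construction of $\g\olG$ was engineered precisely so that the number of edges between $\g v$ and $\g w$ in $\g\olG$ equals $N(v,w;\olG)$, so $\phi$ carries edges of $\olG$ bijectively to edges of $\g\olG$ with matching multiplicities. Hence $\phi$ is a graph isomorphism, which is (a).

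For part (b), I would take $\a \in \osA$ and consider the map $\psi := \phi\circ\a\circ\phi^{-1}$ on the vertex-set of $\g\olG$. Since $\a \in \osA \subseteq \Aut(\olG)$ and $\phi$ is an isomorphism $\olG \to \g\olG$ by (a), $\psi$ automatically lies in $\Aut(\g\olG)$. It then remains to identify $\psi$ with $\g\a\g^{-1}$ as permutations of $V$; for any $v \in V$,
\[
\psi(\g v) \;=\; \phi(\a\phi^{-1}(\g v)) \;=\; \phi(\a v) \;=\; \g\a v \;=\; (\g\a\g^{-1})(\g v),
\]
which gives the desired equality. Letting $\a$ range over $\osA$ yields $\g\osA\g^{-1} \subseteq \Aut(\g\olG)$.

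I do not anticipate any genuine difficulty: the whole statement is bookkeeping once $\phi$ is on the table. The only point worth a moment of care is that $\olG$ is permitted to be a multigraph, so the verification that $\phi$ preserves edge multiplicities must be invoked explicitly; but this is literally the defining property of $\g\olG$, so it costs nothing.
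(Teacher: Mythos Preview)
Your proposal is correct and aligns with the paper's proof. For (a) the paper simply says it is immediate from the definition of $\g\olG$, which is exactly your observation about $\phi$; for (b) the paper verifies directly that $N(\g\a\g^{-1}(\g v),\g\a\g^{-1}(\g w);\g\olG)=N(\g v,\g w;\g\olG)$ via the chain $N(\g\a v,\g\a w;\g\olG)=N(\a v,\a w;\olG)=N(v,w;\olG)=N(\g v,\g w;\g\olG)$, which is the edge-count unpacking of your conjugation argument $\phi\circ\a\circ\phi^{-1}$.
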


\begin{proof}
(a) This is an immediate consequence of the definition of $\g \olG$.

\smallskip
\noindent
(b) By the definition of $\g\olG$, for $\a\in\osA$,
\begin{align*}
N\bigl(\g\a\g^{-1}(\g v), \g\a\g^{-1}(\g w); \g \olG\bigr)
&= N(\g\a v, \g\a w;\g\olG)\\
&=N(\a v, \a w; \olG)\\
&= N(v,w;\olG)\\
&= N(\g v, \g w; \g\olG),
\end{align*}
and the claim follows.
\end{proof}

Assume $\olmu=\mu$.
By Lemma \ref{l2} with $E_k := E_k^1$,
\begin{equation}
\liminf_{n\to\infty}\sigma_n(0,E_2)^{1/n}<
\mu\ (=\olmu).  \label{as}
\end{equation}
Exactly one of the following holds:
\begin{align}
\liminf_{n\to\infty}\sigma_n(0,E_\ell)^{1/n}&<\mu,\label{bil}\\
\lim_{n\to\infty}\sigma_n(0,E_\ell)^{1/n}&=\mu.\label{bie}
\end{align}

Assume first that \eqref{bil} holds. By Lemma \ref{l1}, there exist $a>0$ and $m\in \NN$
such that 
\begin{equation}
\limsup_{n\to\infty}\sigma_n(an, E_{\ell}(m))^{1/n}<\mu. \label{ccdd}
\end{equation}
Let $R_n$ be the set of $n$-step SAWs from $v_0$ on $G$ 
on which $E_{\ell}(m)$ occurs at least $an$ times.
By \eqref{ccdd},
\begin{equation}\label{ce}
\lim_{n\to\infty} |R_n|^{1/n}=\mu.
\end{equation}

Let $\rho \in V$ be such that $\osA$ has the \fcp\ with root $\rho$, 
and let $\nu_0,\nu_1,\dots,\nu_s$
be given accordingly as in Definition \ref{def:fcp}.
Let $\sO=\sA C_1$ be the orbit under $\sA$ of $C_1$ 
viewed as a set of labelled vertices, and let $\sO_i = \nu_i \osA C_1$.
Let $0 \le i \le s$. We say that $E_{\ell}^i(m)$ 
occurs at the $j$th step of a SAW $\pi$ on $G$ if there exists $\g\in\sA$
with $\g \pi_0=\pi_j$ and $\g C_1\in\sO_i$,
such that all the vertices of $\g C_1$ are visited by the $2m$-step SAW $(\pi_{j-m},\dots,\pi_{j+m})$ (subject to the usual amendment if
$j-m<0$ or $j +m >n$).

Let $R_{n}^{(i)}$ be the set of $n$-step SAWs from $v_0$ on $G$ 
for which $E_{\ell}^{i}(m)$
occurs at least ${an}/{(s+1)}$ times. Thus,
\begin{equation}\label{un}
R_n\subseteq\bigcup_{i=0}^{s}R_n^{(i)}.
\end{equation}
By \eqref{ce}--\eqref{un}, there exists $i$ satisfying $0\leq i\leq s$ such that
\begin{equation}\label{un2}
\limsup_{n\to\infty} |R_n^{(i)}|^{1/n}=\mu,
\end{equation}
and we choose $i$ accordingly.

We now apply the argument in the proof of Lemma \ref{l2}
with the constant $a$ replaced by $a/(s+1)$.
Let $\pi\in R_n^{(i)}$ be such that $E_{\ell}^{i}(m)$ occurs at 
steps $j_1<j_2<\dots<j_u$, and in addition
\eqref{g820-2}--\eqref{g820} hold with $h$ replaced by $1$,
and with each $\g_t C_1 \in \sO_i$.
Since $\g_t C_1 \in \sO_i = \nu_i \osA C_1$, there exists
$\a_t \in\osA$ such that $\g_t C_1 = [\nu_i \a_t \nu_i^{-1}] \nu_iC_1$.
By Lemma \ref{lem:abar}(b),   $[\nu_i \a_t \nu_i^{-1}] \nu_i \olG$
is isomorphic to $\nu_i\olG$. Therefore, $\g_t f$ is an edge
of $\nu_i \olG$ but not of $\nu_i G$.
We think of $\pi$ as a SAW on the graph $\nu_i G$.

Let $\a_t$, $\om_t$ be as in \eqref{g820+1}, and note that \eqref{g820+2}
holds as before.
Consider the replacement of the 
subwalk $(\pi_{\a_t},\dots,\pi_{\omega_t})$ 
by a walk that goes along that part of $\g_t C_1$ ($=  \g_t'\nu_i C_1$ where
$\g_t' = \nu_i \a_t \nu_i^{-1}$),
viewed as a cycle,  
that includes an edge of $\g_t F$ ($= \g'_t\nu_i F$). 
By Lemma \ref{lem:abar}(b), the new walk is a SAW on $\nu_i\olG$.
Furthermore it uses edges of $\nu_i\olG$ 
not belonging to $\nu_i G$. Lower and upper bounds 
may be derived as in the proof of Lemma \ref{l2},
and these lead to a contradiction when working along the 
subsequence implicit in \eqref{un2}. 
This implies $\mu(G)<\mu(\nu_i\olG )$ subject
to \eqref{bil}. 
By Lemma \ref{lem:abar}(a), $\mu(\nu_i\olG)=\olmu$ and
hence $\mu(G) < \olmu$.

Assume finally that \eqref{bie} holds. By \eqref{as}--\eqref{bie}, 
there exists $k\in\NN$ satisfying $2\leq k< \ell$ such that
\begin{align}
\liminf_{n\to\infty}\sigma_n(0,E_k)^{1/n}&<\mu, \nonumber\\
\lim_{n\to\infty}\sigma_n(0,E_{k+1})^{1/n}&=\mu. \label{cdn1}
\end{align}
By Lemma \ref{l1}, there exist $a>0$ and $m\in\NN$ such that
\begin{equation}
\limsup_{n\to\infty}\sigma_n(an,E_k(m))^{1/n}
<\mu.\label{cdn2}
\end{equation}

Let  $S_n$ be the set of $n$-step SAWs from $v_0$ on $G$ such that
$E_{k+1}$ never 
occurs  and  $E_k(m)$ occurs at least $an$  times. 
By \eqref{cdn1}--\eqref{cdn2},
\begin{equation}\label{ccq}
\lim_{n\to\infty} |S_n|^{1/n}=\mu.
\end{equation}
For $0 \le i \le s$, let $S_n^{(i)}$ be the set of $n$-step SAWs from 
$v_0$ on $G$ such that $E_{k+1}$ never occurs,
and $E_k^i(m)$ occurs at least $an/s$ times. Thus,
\begin{equation}\label{cun}
S_n\subseteq\bigcup_{i=0}^{s} S_n^{(i)}.
\end{equation}
By \eqref{ccq}--\eqref{cun}, there exists $i$ satisfying $0\leq i\leq s$ such that
\begin{equation*}
\limsup_{n\to\infty}|S_n^{(i)}|^{1/n}=\mu,
\end{equation*}
and we choose $i$ accordingly.

We apply the argument of the proof of Lemma \ref{l2} once again.
Let $\pi\in S_n^{(i)}$ be such that $E^i_k(m)$ occurs at steps $j_1<j_2<\dots<j_u$, 
and in addition
\eqref{g820-2}--\eqref{g820} hold with $\g_t C_1 \in \sO_i$.
The argument is as above, and we do not repeat the details. 
This implies $\mu(G)<\mu(\olG)$ when \eqref{bie} holds, and the proof is complete.
\end{proof}

\section*{Acknowledgements} 
This work was supported in part by the Engineering
and Physical Sciences Research Council under grant EP/103372X/1.
It was completed during a visit by GRG to the Theory Group
at Microsoft Research.
The authors thank Alexander Holroyd and Yuval Peres for 
several helpful discussions
concerning the algebra of graphs, and especially the former for his suggestions
concerning Theorem \ref{thm:addedge}.

\bibliography{saw-ineq2}

\providecommand{\bysame}{\leavevmode\hbox to3em{\hrulefill}\thinspace}
\providecommand{\MR}{\relax\ifhmode\unskip\space\fi MR }
\providecommand{\MRhref}[2]{%
  \href{http://www.ams.org/mathscinet-getitem?mr=#1}{#2}
}
\providecommand{\href}[2]{#2}
\begin{thebibliography}{10}

\bibitem{AG}
M.~Aizenman and G.~R. Grimmett, \emph{Strict monotonicity of critical points in
  percolation and ferromagnetic models}, J. Statist. Phys. \textbf{63} (1991),
  817--835.

\bibitem{a04}
S.~E. Alm, \emph{Upper and lower bounds for the connective constants of
  self-avoiding walks on the {A}rchimedean and {L}aves lattices}, J. Phys. A:
  Math. Gen. \textbf{38} (2005), 2055--2080.

\bibitem{AJ90}
S.~E. Alm and S.~Janson, \emph{Random self-avoiding walks on one-dimensional
  lattices}, Commun. Statist. Stoch. Mod. \textbf{6} (1990), 169--212.

\bibitem{bab95}
L.~Babai, \emph{Automorphism groups, isomorphism, reconstruction}, Handbook of
  Combinatorics, vol.~II, Elsevier, Amsterdam, 1995, pp.~1447--1540.

\bibitem{bdgs}
R.~Bauerschmidt, H.~Duminil-Copin, J.~Goodman, and G.~Slade, \emph{Lectures on
  self-avoiding-walks}, Probability and Statistical Physics in Two and More
  Dimensions (D.~Ellwood, C.~M. Newman, V.~Sidoravicius, and W.~Werner, eds.),
  Clay Mathematics Institute Proceedings, vol.~15, CMI/AMS publication, 2012,
  pp.~395--476.

\bibitem{BGG}
N.~R. Beaton, M.~Bousquet-M\'elou, J.~de Gier, H.~Duminil-Copin, and A.~G.
  Guttmann, \emph{The critical fugacity for surface adsorption of self-avoiding
  walks on the honeycomb lattice is $1+\sqrt 2$}, Commun. Math. Phys. (2012),
  \url{http://arxiv.org/abs/1109.0358}.

\bibitem{BLPS}
I.~Benjamini, R.~Lyons, Y.~Peres, and O.~Schramm, \emph{Group-invariant
  percolation on graphs}, Geom. Funct. Anal. \textbf{9} (1999), 29--66.

\bibitem{BenS96}
I.~Benjamini and O.~Schramm, \emph{Percolation beyond {$\ZZ\sp d$}, many
  questions and a few answers}, Electron. Comm. Probab. \textbf{1} (1996),
  71--82.

\bibitem{BGK}
C.~Bezuidenhout, G.~R. Grimmett, and H.~Kesten, \emph{Strict inequality for
  critical values of {P}otts models and random-cluster processes}, Commun.
  Math. Phys. \textbf{158} (1993), 1--16.

\bibitem{f}
P.~Flory, \emph{{Principles of Polymer Chemistry}}, Cornell University Press,
  1953.

\bibitem{GR01}
C.~Godsil and G.~Royle, \emph{{Algebraic Graph Theory}}, Springer, Berlin,
  2001.

\bibitem{G94}
G.~R. Grimmett, \emph{Potts models and random-cluster processes with many-body
  interactions}, J. Statist. Phys. \textbf{75} (1994), 67--121.

\bibitem{G99}
\bysame, \emph{Percolation}, 2nd ed., Springer, Berlin, 1999.

\bibitem{GHP}
G.~R. Grimmett, A.~E. Holroyd, and Y.~Peres, \emph{Extendable self-avoiding
  walks}, Ann. Inst. Henri Poincar\'e D: Combin., Phys. Inter. \textbf{1}
  (2014), 61--75.

\bibitem{GrL1}
G.~R. Grimmett and Z.~Li, \emph{Bounds on the connective constants of regular
  graphs}, Combinatorica (2012), \url{http://arxiv.org/abs/1210.6277}.

\bibitem{GrLrev}
\bysame, \emph{Counting self-avoiding walks},  (2013),
  \url{http://arxiv.org/abs/1304.7216}.

\bibitem{GrL2}
\bysame, \emph{Self-avoiding walks and the {F}isher transformation}, Elec. J.
  Combin. \textbf{20} (2013), Paper P47.

\bibitem{jmhII}
J.~M. Hammersley, \emph{Percolation processes {II. T}he connective constant},
  Proc. Camb. Phil. Soc. \textbf{53} (1957), 642--645.

\bibitem{HW62}
J.~M. Hammersley and D.~J.~A. Welsh, \emph{Further results on the rate of
  convergence to the connective constant of the hypercubical lattice}, Quart.
  J. Math. Oxford \textbf{13} (1962), 108--110.

\bibitem{j04}
I.~Jensen, \emph{Improved lower bounds on the connective constants for
  two-dimensional self-avoiding walks}, J. Phys. A: Math. Gen. \textbf{37}
  (2004), 11521--11529.

\bibitem{hkI}
H.~Kesten, \emph{On the number of self-avoiding walks}, J. Math. Phys.
  \textbf{4} (1963), 960--969.

\bibitem{Lac}
H.~Lacoin, \emph{Non-coincidence of quenched and annealed connective constants
  on the supercritical planar percolation cluster}, Probab. Theory Rel. Fields
  (2012), \url{http://arxiv.org/abs/1203.6051}.

\bibitem{Li-loc}
Z.~Li, \emph{Locality of the connective constant on {C}ayley graphs},  (2013),
  preprint.

\bibitem{LyP}
R.~L{yons with Y.\ Peres}, \emph{{Probability on Trees and Networks}},
  {Cambridge University Press}, Cambridge, 2012, in preparation,
  \url{http://mypage.iu.edu/~rdlyons/}.

\bibitem{ms}
N.~Madras and G.~Slade, \emph{{Self-Avoiding Walks}}, Birkh\"auser, Boston,
  1993.

\bibitem{Neb}
C.~Nebbia, \emph{On the amenability and {Kunze--Stein} property for groups
  acting on a tree}, Pacific J. Math. \textbf{135} (1988), 371--380.

\bibitem{Rob}
D.~J.~S. Robinson, \emph{{A Course in the Theory of Groups}}, Springer, New
  York, 1982.

\bibitem{SoW}
P.~M. Soardi and W.~Woess, \emph{Amenability, unimodularity, and the spectral
  radius of random walks on infinite graphs}, Math. Zeit. \textbf{205} (1990),
  471--486.

\bibitem{SW}
C.~E. Soteros and S.~G. Whittington, \emph{Lattice models of branched polymers:
  effects of geometrical constraints}, J. Phys. A: Math. Gen. \textbf{22}
  (1989), 5259--5270.

\bibitem{Trof}
V.~I. Trofimov, \emph{Automorphism groups of graphs as topological groups},
  Math. Notes \textbf{38} (1985), 717--720.

\end{thebibliography}
\bibliographystyle{amsplain}

\end{document}